%%%%%%%%%%%%%%%%%%%%%%%%%%%%
% authors: E. Kakariadis
% title: dirichlet property
% version: 03 December 2012
%%%%%%%%%%%%%%%%%%%%%%%%%%%%

%%%%% packages %%%%
\documentclass[11pt]{amsart}
\usepackage{amssymb, amstext, amscd, amsmath}
\usepackage[all]{xy}
%\usepackage[notcite,notref]{showkeys}
% \input{amsmacros2e}
%\usepackage{fullpage}

%%%% begin macros %%%%

% Proclamation definitions in the most emphatic (plain) style:

\theoremstyle{plain}
\newtheorem{theorem}{Theorem}[section]

\newtheorem{proposition}[theorem]{Proposition}
\newtheorem{lemma}[theorem]{Lemma}

% Proclamation definitions in the less emphatic (definition) style:

\theoremstyle{definition}

\newtheorem{example}[theorem]{Example}

\newtheorem{remark}[theorem]{Remark}

% Proclamation definitions in the least emphatic (remark) style:

%\theoremstyle{remark}
%\newtheorem{remark}{Remark}

\newtheorem*{acknow}{Acknowledgements}

%      Proof environment

\theoremstyle{definition}

%      Proof environment

% Cites in bold rather than roman.
\makeatletter
\def\@cite#1#2{{\m@th\upshape\bfseries%
[{#1\if@tempswa{\m@th\upshape\mdseries, #2}\fi}]}} \makeatother

% Blackboard bold letters

\newcommand{\bbC}{{\mathbb{C}}}
\newcommand{\bbD}{{\mathbb{D}}}

\newcommand{\bbT}{{\mathbb{T}}}
\newcommand{\bbZ}{{\mathbb{Z}}}

% Capital script letters
\newcommand{\A}{{\mathcal{A}}}
\newcommand{\B}{{\mathcal{B}}}

\newcommand{\F}{{\mathcal{F}}}
\newcommand{\G}{{\mathcal{G}}}

\newcommand{\K}{{\mathcal{K}}}
\renewcommand{\L}{{\mathcal{L}}}

\renewcommand{\O}{{\mathcal{O}}}

\newcommand{\T}{{\mathcal{T}}}

% Fraktur letters

% Math boldface

\newcommand{\be}{{\mathbf{e}}}

% Roman letters for math
\newcommand{\rC}{{\mathrm{C}}}

% Greek Letters

\renewcommand{\phi}{\varphi}
\newcommand{\upchi}{{\raise.35ex\hbox{\ensuremath{\chi}}}}

\def\ga{\alpha}

\def\be{\beta}

% Text used in equations
\newcommand{\foral}{\text{ for all }}

% Operators
\newcommand{\alg}{{\operatorname{alg}}}
\newcommand{\id}{{\operatorname{id}}}
\newcommand\Span{\mathop{\rm span}}
\newcommand\cov{\mathop{\rm cov}}

% Useful shortforms
\newcommand{\ca}{\mathrm{C}^*}
\newcommand{\cenv}{\mathrm{C}^*_{\textup{env}}}

\newcommand{\sca}[1]{\left\langle#1\right\rangle} % \sca{a,b} =<a,b>
\newcommand{\lsca}[1]{\left[#1\right]}            % \lsca{a,b} =(a,b)
    %\nor{x}=||x||
\newcommand{\vrt}{\G^{(0)}}

\newcommand{\edg}{\G^{(1)}}
\newcommand{\xtau}{X_{\tau}}

%%%% end of macros %%%%

\addtocontents{toc}{\protect\setcounter{tocdepth}{1}}

\begin{document}

\title{The Dirichlet Property for Tensor Algebras}

\author[E.T.A. Kakariadis]{Evgenios T.A. Kakariadis}
\address{Pure Mathematics Department, University of Waterloo,
   Ontario N2L-3G1, Canada}
\email{ekakaria@uwaterloo.ca}

\thanks{2010 {\it  Mathematics Subject Classification.} 46L08, 47L55.}
\thanks{{\it Key words and phrases:} $\ca$-correspondences, semi-Dirichlet algebras.}
\thanks{The author was partially supported by the Fields Institute for Research in the Mathematical Sciences}

\maketitle

\begin{abstract}
We prove that the tensor algebra of a $\ca$-correspondence $X$ is Dirichlet if and only if $X$ is a Hilbert bimodule. As a consequence, we point out and fix an error appearing in the proof of a famous result of Duncan. Secondly we answer a question raised by Davidson and Katsoulis concerning tensor algebras and semi-Dirichlet algebras, by giving an example of a Dirichlet algebra that cannot be described as the tensor algebra of any $\ca$-correspondence. Furthermore we show that the adding tail technique, as extended by the author and Katsoulis, applies in a unique way to preserve the class of Hilbert bimodules.

The exploitation of these ideas implies that the tensor algebra of row-finite graphs, the tensor algebra of multivariable automorphic $\ca$- dynamics and Peters' semicrossed product of an injective $\ca$-dynamical system  have the unique extension property. The two latter provide examples of non-separable operator algebras that admit a Choquet boundary in the sense of Arveson.
\end{abstract}

%%%%%%%%%%%%%%%%%%%%%%%%%%%%%%%%
\section*{Introduction}
%%%%%%%%%%%%%%%%%%%%%%%%%%%%%%%%

In this paper we settle some questions raised in the context of tensor algebras of $\ca$-correspondences and semi-Dirichlet algebras. The key result is that the tensor algebra of a $\ca$-correspondence $X$ is Dirichlet if and only if $X$ is a Hilbert bimodule (Theorem \ref{T:main 3}). Our purpose is to underline its consequences.

First, we point out an error in \cite[Proposition 3]{Dun08}, which can be corrected: Peters' semicrossed product of an injective $\ca$-dynamical system may not be always Dirichlet but it has the unique extension property (Theorem \ref{T:main 5}). Moving even further we show that the tensor algebras of row-finite graphs or multivariable automorphic $\ca$-dynamics have also the unique extension property (Theorem \ref{T:graphs} and Theorem \ref{T:auto}). Recall that if an operator algebra has the unique extension property then it admits a Choquet boundary in the sense of Arveson \cite{Ar08}, even when it is non-separable. We remark that semicrossed products and multivariable dynamical systems have been under considerable investigation for the last four decades (e.g., \cite{Arv1,ArvJ,DavKak11,DK08,DKsimple,DavKat,Dun08,HadH,McAsMuhSai79,MS00,Pet2,Pow} to mention but a few). Recently Cornelissen and Marcolli provide a link that connects the theory of Davidson and Katsoulis \cite{DavKat} for multivariable (automorphic) dynamical systems with number theory \cite{C,CM} and reconstruction of graphs \cite{CM2}.

As a second consequence we answer a question raised in \cite{DavKat11-2}. Davidson and Katsoulis \cite{DavKat11-2} examine the dilation theory of operator algebras along with versions of a commutant lifting theorem such as finite dimensional nest algebras, tensor algebras of $\ca$-correspondences, bilateral tree algebras etc. Moreover they examine the dilation theory of the class of semi-Dirichlet algebras. Tensor algebras is a subclass of the semi-Dirichlet algebras and the question raised in \cite{DavKat11-2} was whether these two classes coincide. We answer this question to the negative here by giving an example of an operator algebra that is semi-Dirichlet (even more it is Dirichlet) but cannot be described as the tensor algebra of any $\ca$-correspondence.

Finally, we give an application to the ``adding tail'' technique. In \cite{KakKats11} the author and Katsoulis extend the construction of Muhly and Tomforde \cite{MuTom04}: given a non-injective $\ca$-correspondence one can produce injective $\ca$-correspondences $Y$ such that the Cuntz-Pimsner algebras $\O_X$ and $\O_Y$ are Morita equivalent, by adding (a variety of) appropriate tails. In this way one can add an appropriate tail such that $X$ and $Y$ live in the same sub-class of $\ca$-correspondences, e.g., in the class of semicrossed products. In this paper we show that this technique respects the class of Hilbert bimodules. Even more, when restricted to this sub-class of $\ca$-correspondences, the tail has a unique form (Theorem \ref{T:main 4}).

The structure of the paper is as follows. In Section \ref{S:preliminaries} we briefly discuss the elements of the theory that we use. Since these subjects are by now well known we omit full details. In Section \ref{S:dir pro} we prove the key result. In Section \ref{S:app} we give the applications concerning the adding tail technique, a discussion on \cite[Proposition 3]{Dun08}, the unique extension property of various (non-separable) operator algebras and the counterexample related to \cite{DavKat11-2}.

%%%%%%%%%%%%%%%%%%%%%%%%%%%%%%%%
\section{Preliminaries}\label{S:preliminaries}
%%%%%%%%%%%%%%%%%%%%%%%%%%%%%%%%

We will require terminology concerning (non-selfadjoint) operator algebras and $\ca$-correspondences; for more details see \cite{Pau02} and \cite{Lan95}, respectively. Every representation is assumed to act on a Hilbert space.

Let $\A$ be an operator algebra and $\rho\colon \A \rightarrow \B(H)$ a completely contractive representation. A \emph{dilation} $\nu\colon \A \rightarrow \B(K)$ of $\rho$ is a completely contractive representation such that $P_H \nu(\cdot)|_H = \rho$. A representation is called \emph{maximal} if it has no non-trivial dilations. By \cite{DrMc05} every completely isometric representation has a maximal dilation. The same holds for a completely contractive representation $\rho$, by considering the maximal dilation of the direct sum of $\rho$ with a maximal completely isometric representation.

For a completely isometric homomorphism $j \colon \A \rightarrow C= \ca(j(\A))$, the pair $(C,j)$ is called \emph{a $\ca$-cover for $\A$}. The \emph{$\ca$-envelope $\cenv(\A) \equiv (\cenv(\A),\iota)$} of $\A$ is the (universal) $\ca$-cover with the following property: for any $\ca$-cover $(C,j)$ there is a $*$-epimorphism $\Phi\colon C \rightarrow \cenv(\A)$ such that $\Phi(j(a))=\iota(a)$ for all $a\in \A$. For the existence of the $\ca$-envelope see \cite{Ham79,DrMc05}.

An operator algebra $\A$ is called \emph{Dirichlet} if $\iota(\A) + \iota(\A)^*$ is dense in the $\ca$-envelope $\cenv(\A)$. If $\iota(\A)^*\iota(\A) \subseteq \iota(\A) + \iota(\A)^*$, then $\A$ is called \emph{semi-Dirichlet}. An operator algebra $\A$ has \emph{the unique extension property} if the restriction of every faithful representation of $\cenv(\A)$ to $\A$ is maximal. In this case $\A$ has automatically a \emph{Choquet boundary} in the sense of Arveson \cite{Ar08} (even when $\A$ is non-separable); for a quick proof consider the free atomic representation of $\cenv(\A)$. Recall that Arveson \cite{Ar08} proves the existence of the Choquet boundary only for separable operator algebras.

A \emph{$\ca$-correspondence $X_A$ over a $\ca$-algebra $A$} is a right Hilbert $A$-module together with a $*$-homomorphism $\phi_X\colon A \rightarrow \L(X)$.
A (\emph{Toeplitz}) \emph{representation} of $X$ into a $\ca$-algebra $B$, is a pair $(\pi,t)$, where $\pi\colon A \rightarrow B$ is a $*$-homomorphism and $t\colon X \rightarrow B$ is  a linear map, such that $\pi(a)t(\xi)=t(\phi_X(a)(\xi))$ and $t(\xi)^*t(\eta)=\pi(\sca{\xi,\eta}_X)$, for all $a\in A$ and $\xi,\eta\in X$. The $\ca$-identity implies that $t(\xi)\pi(a)=t(\xi a)$. A representation $(\pi, t)$ is called \textit{injective} if $\pi$ is injective; in that case $t$ is an isometry.
The $\ca$-algebra generated by a representation $(\pi,t)$ equals the closed linear span of $t^n(\bar{\xi})t^m(\bar{\eta})^*$, where $\bar{\xi}\equiv \xi_1 \otimes \cdots \otimes \xi_n \in X^{\otimes n}$ and $t^n(\bar{\xi})\equiv t(\xi_1)\dots t(\xi_n)$. For any representation $(\pi,t)$ there exists a $*$-homomorphism $\psi_t\colon \K(X)\rightarrow B$, such that $\psi_t(\Theta^X_{\xi,\eta})= t(\xi)t(\eta)^*$ \cite{KajPinWat98}.

Let $J$ be an ideal in $\phi_X^{-1}(\K(X))$; a representation $(\pi,t)$ is called \emph{$J$-coisometric} if $\psi_t(\phi_X(a))=\pi(a)$, for all $a\in J$.
Following Katsura \cite{Kats04}, the representations $(\pi,t)$ that are $J_{X}$-coisometric, where $J_X=\ker\phi_X^\bot \cap \phi_X^{-1}(\K(X))$, are called \emph{covariant representations}. We denote by $\cov(X)$ the family of injective pairs $(\pi,t)$ that admit a gauge action $\{\be_z\}_{z\in \bbT}$.

The \emph{Toeplitz-Cuntz-Pimsner algebra} $\T_X$ is the universal $\ca$-algebra for ``all'' representations of $X$, and the \emph{Cuntz-Pimsner algebra} $\O_X$ is the universal $\ca$-algebra for ``all'' covariant representations of $X$. There is a well known representation $(\rho,s)$ acting on the Fock space $\F_{X} : = \oplus_n X^{\otimes n}$. Fowler and Raeburn \cite{FR} (resp. Katsura \cite{Kats04}) prove that the $\ca$-algebra $\ca( \rho, s)$ (resp. $\ca(\rho, s)/ \K(\F_{X}J_{X})$) is $*$-isomorphic to $\T_{X}$ (resp. $\O_{X}$).

The \emph{tensor algebra} $\T_{X}^+$ is the norm-closed algebra generated by the universal copy of $A$ and $X$ in $\T_X$. Examples of tensor algebras are Peters' semicrossed product \cite{Pet2}, Popescu's non-commutative disc algebras \cite{Pop3}, the tensor algebras of graphs \cite{MS} and the tensor algebras for multivariable dynamics \cite{DavKat,MS}. Katsoulis and Kribs \cite[Theorem 3.7]{KatsKribs06} prove that the $\ca$-envelope of the tensor algebra $\T^+_X$ is $\O_X$.

A \emph{Hilbert bimodule} $X_A$ is a $\ca$-correspondence together with a left inner product $\lsca{\cdot,\cdot}_X\colon X \times X \rightarrow A$, such that $\lsca{\xi,\eta}_X \cdot \zeta=\xi \cdot \sca{\eta,\zeta}_X$ for all $\xi, \eta,\zeta \in X$. This compatibility relation implies that $J_X= \overline{\Span} \{ \lsca{\xi,\eta}_X \mid \xi,\eta \in X\}$ and that $\phi_X(\lsca{\xi,\eta}_X) = \Theta^X_{\xi,\eta}$. Thus $\phi_X$ is injective if and only if the Hilbert bimodule $X_A$ is \emph{essential}, i.e., when the ideal $J_X$ is essential in $A$. When $J_X=A$ then $X_A$ is called an \emph{imprimitivity bimodule}.

%%%%%%%%%%%%%%%%%%%%%%%%%%%%%%%%
\section{The Dirichlet Property for Tensor Algebras}\label{S:dir pro}
%%%%%%%%%%%%%%%%%%%%%%%%%%%%%%%%

It is immediate that Dirichlet algebras have the unique extension property and are semi-Dirichlet. On the other hand tensor algebras of $\ca$-correspondences are also semi-Dirichlet, since $\cenv(\T_X^+) \simeq \O_X$. Here we show that $\T_X^+$ is in particular Dirichlet if and only if $X_A$ is a Hilbert bimodule. Let us start with a general lemma.

%%%%%%%%%%%%%%%%%%%%%%%%%%%%%%%%
\begin{lemma}\label{L:equiv via comp}
Let $X_A$ be a $\ca$-correspondence. Then the following are equivalent
\begin{enumerate}
\item[\textup{(1)}] $X_A$ is a bimodule;
\item[\textup{(2)}] $K(X) \subseteq \phi_X(A)$ and $\phi_X^{-1}(\K(X))=\ker\phi_X \oplus J_X$, as an orthogonal sum of ideals;
\item[\textup{(3)}] the restriction of $\phi_X$ to $J_X$ is a $*$-isomorphism onto $\K(X)$.
\end{enumerate}
In particular, if $\phi_X$ is injective then $X$ is a Hilbert bimodule if and only if $\K(X) \subseteq \phi_X(A)$.
\end{lemma}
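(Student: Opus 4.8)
The plan is to prove the equivalence of the three conditions by establishing a cycle of implications, and then deduce the final ``in particular'' statement as an easy corollary. Throughout I will use the key structural facts recorded in the Preliminaries: for a Hilbert bimodule the left inner product forces $\phi_X(\lsca{\xi,\eta}_X)=\Theta^X_{\xi,\eta}$, so that $\phi_X$ maps the ideal $J_X=\overline{\Span}\{\lsca{\xi,\eta}_X\}$ onto the generators $\Theta^X_{\xi,\eta}$ of $\K(X)$.

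\smallskip

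For (1)$\Rightarrow$(2), I would argue as follows. Assume $X_A$ is a bimodule. Since every generator $\Theta^X_{\xi,\eta}$ of $\K(X)$ lies in $\phi_X(J_X)\subseteq\phi_X(A)$, continuity and the fact that $\phi_X(A)$ is closed give $\K(X)\subseteq\phi_X(A)$. For the direct-sum decomposition, recall $J_X=\ker\phi_X^{\perp}\cap\phi_X^{-1}(\K(X))$, so $\ker\phi_X$ and $J_X$ are orthogonal ideals both contained in $\phi_X^{-1}(\K(X))$; the content is that they \emph{span} it. Given $a\in\phi_X^{-1}(\K(X))$, the element $\phi_X(a)\in\K(X)=\phi_X(J_X)$ equals $\phi_X(b)$ for some $b\in J_X$, whence $a-b\in\ker\phi_X$ and $a=(a-b)+b\in\ker\phi_X\oplus J_X$. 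For (2)$\Rightarrow$(3): the decomposition shows $\phi_X|_{J_X}$ is injective (its kernel is $J_X\cap\ker\phi_X=0$ by orthogonality), while $\phi_X(J_X)=\phi_X(\phi_X^{-1}(\K(X)))=\K(X)$ since $\K(X)\subseteq\phi_X(A)$ forces $\phi_X(\phi_X^{-1}(\K(X)))=\K(X)$; a $*$-isomorphism onto is thus obtained. For (3)$\Rightarrow$(1): given that $\phi_X|_{J_X}\colon J_X\to\K(X)$ is a $*$-isomorphism, I would define the left inner product by $\lsca{\xi,\eta}_X:=(\phi_X|_{J_X})^{-1}(\Theta^X_{\xi,\eta})$ and verify the compatibility relation $\lsca{\xi,\eta}_X\cdot\zeta=\xi\cdot\sca{\eta,\zeta}_X$. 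This last verification reduces to checking that $\phi_X(\lsca{\xi,\eta}_X)=\Theta^X_{\xi,\eta}$ acts correctly, since $\Theta^X_{\xi,\eta}(\zeta)=\xi\cdot\sca{\eta,\zeta}_X$ by definition of a rank-one operator, and this is immediate from the defining formula.

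\smallskip

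The main obstacle I anticipate is the implication (3)$\Rightarrow$(1): one must check that the candidate left inner product really is sesquilinear, $A$-balanced on the appropriate side, and positive in the bimodule sense, and that it genuinely produces a well-defined map into $J_X\subseteq A$ rather than just into $\K(X)$. The isomorphism hypothesis is exactly what guarantees that $(\phi_X|_{J_X})^{-1}(\Theta^X_{\xi,\eta})$ is a single well-defined element of $A$, so the linearity and compatibility properties can be transported back from the known relations among the $\Theta^X_{\xi,\eta}$ through the $*$-isomorphism.

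\smallskip

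Finally, for the ``in particular'' statement, suppose $\phi_X$ is injective. Then $\ker\phi_X=0$, so condition (2) collapses to ``$\K(X)\subseteq\phi_X(A)$ and $\phi_X^{-1}(\K(X))=J_X$.'' But when $\phi_X$ is injective the ideal $J_X=\ker\phi_X^{\perp}\cap\phi_X^{-1}(\K(X))$ already equals $\phi_X^{-1}(\K(X))$ automatically, since $\ker\phi_X^{\perp}=A$. Hence for injective $\phi_X$ condition (2) is equivalent to the single requirement $\K(X)\subseteq\phi_X(A)$, and by the established equivalence (1)$\Leftrightarrow$(2) this is exactly the statement that $X$ is a Hilbert bimodule (necessarily essential, since injectivity of $\phi_X$ corresponds to $J_X$ being essential, as noted in the Preliminaries).
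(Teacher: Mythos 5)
Your argument is correct and follows essentially the same route as the paper: the passage between (2) and (3) is the identical computation (lift $\phi_X(a)\in\K(X)=\phi_X(J_X)$ to some $b\in J_X$ and observe $a-b\in\ker\phi_X$, with orthogonality built into the definition of $J_X$), and the ``in particular'' clause is handled the same way. The only difference is cosmetic: where you sketch the construction of the left inner product via $\lsca{\xi,\eta}_X:=(\phi_X|_{J_X})^{-1}(\Theta^X_{\xi,\eta})$ for $(3)\Rightarrow(1)$, the paper simply cites Katsura for the equivalence $(1)\Leftrightarrow(3)$ and uses the compatibility relation exactly as you do.
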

\begin{proof}
The equivalence $(1) \Leftrightarrow (3)$ is due to the compatibility relation that characterizes Hilbert bimodules (for example see \cite{Kats03}).

Assume that item (2) holds. By definition of  $J_X$ the restriction of $\phi_X$ to $J_X$ is injective. Moreover it is also onto $\K(X)$ since
\begin{align*}
\K(X)=\phi_X \circ \phi_X^{-1}(\K(X))= \phi_X(\ker\phi_X \oplus J_X)= \phi_X(J_X),
\end{align*}
which gives the implication $(2) \Rightarrow (3)$.

For the converse, first observe that $\K(X)= \phi_X(J_X) \subseteq \phi_X(A)$. Let $a \in \phi_X^{-1}(\K(X))$, i.e., $\phi_X(a)=k \in \K(X)$. Since, $\phi_X(J_X)=\K(X)$, there is an $x\in J_X$ such that $\phi_X(x)=k$. Thus $\phi_X(a-x)=0$, hence $a-x \in \ker\phi_X$. So $a\in \ker\phi_X + J_X$ which implies that $\phi^{-1}_X(\K(X)) \subseteq \ker\phi_X + J_X$. By definition of $J_X$ the sum is orthogonal.
\end{proof}

%%%%%%%%%%%%%%%%%%%%%%%%%%%%%%%%
\begin{theorem}\label{T:main 3}
For a $\ca$-correspondence $X_A$ the following are equivalent:
\begin{enumerate}
\item[\textup{(1)}] $X_A$ is a Hilbert bimodule;
\item[\textup{(2)}] $\psi_t(\K(X)) \subseteq \pi(A)$, for $(\pi,t)\in \cov(X)$;
\item[\textup{(3)}] the tensor algebra $\T_X^+$ is Dirichlet.
\end{enumerate}
\end{theorem}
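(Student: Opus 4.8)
The plan is to prove the cycle of implications $(1)\Rightarrow(2)\Rightarrow(3)\Rightarrow(1)$, exploiting the characterization of Hilbert bimodules from Lemma \ref{L:equiv via comp} and the identification $\cenv(\T_X^+)\simeq\O_X$ due to Katsoulis and Kribs.

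First, for $(1)\Rightarrow(2)$, I would start from the assumption that $X_A$ is a Hilbert bimodule and use item (2) of Lemma \ref{L:equiv via comp}, which gives $\K(X)\subseteq\phi_X(A)$. Given any covariant pair $(\pi,t)\in\cov(X)$, the coisometric condition reads $\psi_t(\phi_X(a))=\pi(a)$ for all $a\in J_X$. Since $\K(X)=\phi_X(J_X)$ by Lemma \ref{L:equiv via comp}, every compact operator $k\in\K(X)$ equals $\phi_X(a)$ for some $a\in J_X$, whence $\psi_t(k)=\psi_t(\phi_X(a))=\pi(a)\in\pi(A)$. This directly yields $\psi_t(\K(X))\subseteq\pi(A)$.

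For $(2)\Rightarrow(3)$, recall that $\O_X$ is the $\ca$-envelope of $\T_X^+$, realized via a covariant pair $(\pi,t)\in\cov(X)$, with $\iota(A)=\pi(A)$ and $\iota(X)=t(X)$. The $\ca$-algebra $\O_X$ is the closed linear span of the words $t^n(\bar\xi)t^m(\bar\eta)^*$. The tensor algebra corresponds to the words with $m=0$, i.e. $n\ge 0$, so $\iota(\T_X^+)+\iota(\T_X^+)^*$ contains all $t^n(\bar\xi)\pi(a)$ and their adjoints. To show this span is dense, I would argue that products $t(\xi)t(\eta)^*=\psi_t(\Theta^X_{\xi,\eta})$ lie in $\psi_t(\K(X))\subseteq\pi(A)$ by hypothesis, and then more generally that any word $t^n(\bar\xi)t^m(\bar\eta)^*$ with $n,m\ge 1$ can be collapsed: the innermost product $t(\xi_n)t(\eta_m)^*$ sits in $\pi(A)$, and using $t(\xi_{n-1})\pi(a)=t(\xi_{n-1}a)$ together with $\pi(a)t(\eta_{m-1})^*=t(\phi_X(a)^*\eta_{m-1}\cdots)$ — more carefully, absorbing the resulting element of $A$ back into the $X$-entries — one reduces the word to shorter length. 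Iterating this reduction expresses every such word as a limit of elements of the form $t^k(\bar\zeta)$ or $t^k(\bar\zeta)^*$, hence every generator of $\O_X$ lies in the closure of $\iota(\T_X^+)+\iota(\T_X^+)^*$, establishing the Dirichlet property.

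For the converse $(3)\Rightarrow(1)$, which I expect to be the \textbf{main obstacle}, I would assume $\iota(\T_X^+)+\iota(\T_X^+)^*$ is dense in $\O_X$ and aim to deduce $\K(X)\subseteq\phi_X(A)$, which (after reducing to the injective case, or invoking Lemma \ref{L:equiv via comp} directly) characterizes Hilbert bimodules. The difficulty is that density of a sum of subspaces does not immediately give the pointwise containment $\psi_t(\K(X))\subseteq\pi(A)$; one must control approximations. The natural strategy is to use the gauge action $\{\be_z\}_{z\in\bbT}$ carried by the pair $(\pi,t)$ and its associated faithful conditional expectation $E$ onto the fixed-point algebra (the degree-zero part). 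Applying $E$ to an approximation of a compact $\psi_t(\Theta^X_{\xi,\eta})$ — which itself has degree zero — by elements of $\iota(\T_X^+)+\iota(\T_X^+)^*$, and noting that $E$ kills the positive-degree terms of $\iota(\T_X^+)$ and the negative-degree terms of its adjoint while fixing degree-zero contributions, should force $\psi_t(\Theta^X_{\xi,\eta})$ to be approximated by degree-zero elements coming from $\pi(A)$, hence to lie in $\pi(A)$. Since $\pi$ is injective this transfers back to $\K(X)\subseteq\phi_X(A)$, and Lemma \ref{L:equiv via comp} completes the argument. The delicate point is verifying that the degree-zero part contributed by $\iota(\T_X^+)$ is exactly $\pi(A)$ and that the expectation behaves as claimed on the closure, so I would be careful to work with finite-degree approximants before passing to limits.
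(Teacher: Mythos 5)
Your outline follows the paper's proof quite closely: the forward direction is established by collapsing words $t^n(\bar{\xi})t^m(\bar{\eta})^*$ into $t^{n-m}(\bar{\xi}')\pi(a)$ or its adjoint using $\psi_{t^n}(\K(X^{\otimes n}))\subseteq \pi(A)$, and the converse is obtained by applying the gauge-induced conditional expectation $E$ to the identity $\O_X=\overline{\T_X^+ +(\T_X^+)^*}$, which is precisely how the paper extracts $\pi(A)=E(\O_X)\supseteq \psi_t(\K(X))$. The only step that does not go through as written is the very last one. From $\psi_t(\K(X))\subseteq\pi(A)$ you do get $\K(X)\subseteq\phi_X(A)$ (since $t$ is isometric, $t(k\xi)=\psi_t(k)t(\xi)=\pi(a)t(\xi)=t(\phi_X(a)\xi)$ forces $k=\phi_X(a)$), but Lemma \ref{L:equiv via comp} only converts $\K(X)\subseteq\phi_X(A)$ into the Hilbert bimodule property when $\phi_X$ is injective; in general, condition (2) of that lemma also demands the orthogonal decomposition $\phi_X^{-1}(\K(X))=\ker\phi_X\oplus J_X$, which $\K(X)\subseteq\phi_X(A)$ alone does not provide. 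Your parenthetical ``reduce to the injective case'' is not available, since the theorem is stated for arbitrary correspondences. The paper avoids this by quoting \cite[Proposition 5.18]{Kats04}, which is exactly the equivalence $(1)\Leftrightarrow(2)$ of the theorem; since your conditional-expectation argument already lands on statement (2), the clean repair is to close your cycle there and invoke (or reprove) $(2)\Rightarrow(1)$, using that for the universal covariant pair one has $\{a\in A:\pi(a)\in\psi_t(\K(X))\}=J_X$, which supplies the missing membership $a\in J_X$ and hence the required decomposition. Everything else, including your direct derivation of $(1)\Rightarrow(2)$ from the covariance relation $\psi_t(\phi_X(a))=\pi(a)$ for $a\in J_X$, is sound and agrees with the paper.
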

\begin{proof}
The equivalence $(1) \Leftrightarrow (2)$ is known \cite[Proposition 5.18]{Kats04}. Assume that item (1) holds and fix $(\pi,t) \in \cov(X)$. Then item (2) holds thus $t^n(\bar{\xi})t^n(\bar{\eta})^* \in \pi(A)$ for all $\bar{\xi},\bar{\eta} \in X^{\otimes n}$ and $n\in \bbZ_+$, inductively. Hence,
\begin{align*}
\O_X &= \overline{\Span}\{t^n(\bar{\xi})t^m(\bar{\eta})^*\mid \bar{\xi}
\in X^{\otimes n}, \bar{\eta} \in X^{\otimes m}, n,m \in
\bbZ_+\}\\
& = \overline{\Span}\{ t^n(\bar{\xi}) + t^m(\bar{\eta})^* \mid \bar{\xi}
\in X^{\otimes n}, \bar{\eta} \in X^{\otimes m}, n,m \in
\bbZ_+\}\\
&= \overline{ \T_X^+ + (\T_X^+)^* },
\end{align*}
where we have used that
\begin{align*}
t^n(\bar{\xi}) t^m(\bar{\eta})^* =
\begin{cases}
t^{n-m}(\bar{\xi'})\pi(a) &, \text{ when } n>m,\\
\pi(a) &, \text{ when } n=m,\\
(t^{m-n}(\bar{\eta'})\pi(a))^* &, \text{ when } n<m.
\end{cases}
\end{align*}
for appropriate $a\in A, \bar{\xi'} \in X^{\otimes n-m}, \bar{\eta'} \in X^{\otimes m-n}$, since $\psi_{t^n}(\K(X^{\otimes n})) \subseteq \pi(A)$. This gives the implication $(1) \Rightarrow (3)$.

For the converse, assume that $\T_X^+$ has the Dirichlet property. Then
\begin{align*}
\overline{\Span} & \{ t^n(\bar{\xi}) + t^m(\bar{\eta})^*\mid \bar{\xi}
\in X^{\otimes n}, \bar{\eta} \in X^{\otimes m}, n,m \in
\bbZ_+\}= \\
& \hspace{15mm} = \overline{ \T_X^+ + (\T_X^+)^* }\\
& \hspace{15mm} = \O_X\\
& \hspace{15mm} = \overline{\Span}\{t^n(\bar{\xi})t^m(\bar{\eta})^* \mid
\bar{\xi} \in X^{\otimes n}, \bar{\eta} \in X^{\otimes m}, n,m \in \bbZ_+\}.
\end{align*}
Let $\{\beta_z\}_{z\in \bbT}$ be the gauge action for $(\pi,t)$ and $E\colon \O_X \rightarrow \O_X^\beta$ be the conditional expectation induced by $\{\beta_z\}_{z\in \bbT}$ such that $E(F)= \int_\bbT \beta_z(F) dz$, for $F\in \O_X$. By applying $E$ to the above equality we deduce
\begin{align*}
\pi(A)&= E\bigg(\overline{\T_X^+ + (\T_X^+)^*} \bigg)\\
& = E( \O_X)\\
& = \overline{\Span}\{t^n(\bar{\xi})t^n(\bar{\eta})^*\mid \bar{\xi}, \bar{\eta} \in X^{\otimes n}, n \in \bbZ_+\}\\
& = \overline{ \pi(A) + \{ \psi_{t^n}(\K(X^{ \otimes n}))\mid n
\in \bbZ_+\} }.
\end{align*}
In particular $\psi_t(\K(X)) \subseteq \pi(A)$, hence $X$ is a Hilbert bimodule.
\end{proof}

%%%%%%%%%%%%%%%%%%%%%%%%%%%%%%%%
\section{Applications}\label{S:app}
%%%%%%%%%%%%%%%%%%%%%%%%%%%%%%%%

%%%%%%%%%%%%%%%%%%%%%%%%%%%%%%%%
\subsection{Adding Tails to Hilbert Bimodules}\label{S:add tail}
%%%%%%%%%%%%%%%%%%%%%%%%%%%%%%%%

In \cite{KakKats11} the author and Katsoulis extended the method of ``adding tails''  introduced by Muhly and Tomforde \cite{MuTom04} in a way that it preserves sub-classes of $\ca$-correspondences. We give a brief description using the more elegant notion of graph correspondences of Deaconu, Kumjian, Pask and Sims \cite{DKPS10}, rather than the language used in \cite{KakKats11}. We use \cite{Raeb} as a general reference for graphs.

Let $\G=(\vrt,\edg,r,s)$ be a row-finite graph, i.e., $|r^{-1}(p)| < \infty$ for all vertices $p\in \G^{(0)}$. Let $(A_p)_{p \in \vrt}$ be a family of $\ca$-algebras and for each $e \in \edg$, let $X_e$ be a $A_{r(e)}$-$A_{s(e)}$-correspondence. Let $A_\G= c_0 (\, (A_p)_{p \in \vrt})$ denote the $c_0$-sum of the family $(A_p)_{p \in \vrt}$. Also, let $Y_0=c_{00}((X_e)_{e \in \edg})$ which is equipped with the $A_\G$-valued inner product
\begin{align*}
&\sca{ u, v  }(p)= \sum_{s(e)=p} \, \sca{u_e,v_e}_{A_p}, &(p \in
\vrt),
\end{align*}
If $X_\G$ is the completion of $Y_0$ with respect to the inner product, then $X_\G$ is $\ca$-correspondence over $A_\G$ when equipped with the actions
\begin{align*}
&(ux)_e = u_ex_{s(e)},  & (e \in \edg),\\
&(\phi_{\G}(x)u)_e= \phi_e(x_{r(e)})(u_e), & (e\in \edg),
\end{align*}
for $x\in A_\G$ and $u \in X_\G$. The $\ca$-correspondence $X_\G$ over $A_\G$ is called \emph{the graph correspondence associated to}
$\big\{\G, \{A_p\}_{p \in \vrt}, \{X_e\}_{e\in \edg} \big\}$.

Every $\ca$-correspondence $X_A$ can be viewed as the following graph correspondence
\begin{align*}
\xymatrix{ \bullet^A \ar@(l,u)[]^X }
\end{align*}

Then Muhly-Tomforde adding tail technique \cite{MuTom04} produces the
following graph correspondence
\begin{align*}
\xymatrix{ \bullet^A \ar@(l,u)[]^X & \bullet^{\ker\phi_X}
\ar@/_/[l]_{\ker\phi_X} & \bullet^{\ker\phi_X}
\ar@/_/[l]_{\ker\phi_X} & \cdots \ar@/_/[l]_{\ker\phi_X}}
\end{align*}

More generally, to a non-injective $\ca$-correspondence $X_A$ we can ``add a tail'' on the distinguished vertex $p_0$ of the cycle graph to obtain
\begin{align*}
\xymatrix{& \dots & & \dots & & & & \\
\bullet^{A_{p_0}} \ar@(l,u)[]^{X} & & & \bullet^{A_1}
\ar@/_/[lll]_{X_1} \ar@/_/[ull]_{X_2} \ar@/_/[dll] & & \bullet^{A_2}
\ar@/_/[ll]_{X_3} \ar@/_/[ull]_{X_4} \ar@/_/[dll] & & \dots
\ar@/_/[ll]_{X_5}\\
& \dots & & \dots & & & &}
\end{align*}
where $A_{p_0} \equiv A$. We also have the following requirements:

\noindent $\bullet$ For $e \neq e_1, e_0$ each $X_e$ is an $A_{r(e)}$-$A_{s(e)}$-equivalence bimodule,

\noindent $\bullet$ For $e = e_1$, $X_1$ is a full $A_1$-module with $\K(X_{1}) \subseteq \phi_{X_1} (A)$ and
\begin{align*}
J_{X}\subseteq \ker \phi_{X_1} \subseteq \left( \ker \phi_X
\right)^{\perp},\quad \phi_{X_1}^{-1}(\K(X_{1})) \subseteq
\phi_X^{-1}(\K(X))
\end{align*}

\noindent $\bullet$ When we exclude the cycle on $p_0$, the graph is $p_0$-accessible, has no sources, and there is one infinite path $w$ such that $r(w)=p_0$ \cite[Theorem 7.3]{KakKats11}.

A graph correspondence of the above form will be denoted by $\xtau$.
By \cite[Theorem 3.10]{KakKats11} $\xtau$ is an injective $\ca$-correspondence and the Cuntz-Pimsner algebra $\O_{\xtau}$ is a full corner of $\O_X$. We mention that the properties listed above are necessary for this purpose \cite[Proposition 3.13]{KakKats11}.

Therefore given a non-injective $\ca$-correspondence $X_A$ one can produce a family of injective $\ca$-correspondences with Morita equivalent Cuntz-Pimsner algebras, by choosing different tails of the above form. However, under the constraint that $\xtau$ is an (essential) Hilbert bimodule, there is a unique form of such tails.

%%%%%%%%%%%%%%%%%%%%%%%%%%%%%%%%
\begin{theorem}\label{T:main 4}
Let $X$ be a non-injective $\ca$-correspondence. Then a graph correspondence $X_\tau$ as defined above is an $($essential$)$ Hilbert bimodule if and only if $X$ is a Hilbert bimodule and $|s^{-1}(p)|=|r^{-1}(p)|=1$ for every $p \neq p_0$.
\end{theorem}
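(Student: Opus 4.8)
The plan is to reduce the statement to the single inclusion $\K(\xtau)\subseteq\ptau(\atau)$ and then localise that inclusion at each vertex. Since $\xtau$ is injective by \cite[Theorem 3.10]{KakKats11}, being an \emph{essential} Hilbert bimodule is equivalent, via Lemma \ref{L:equiv via comp}, to $\K(\xtau)\subseteq\ptau(\atau)$ (here $\ker\ptau=0$, so $J_{\xtau}=\ptau^{-1}(\K(\xtau))$ and essentiality is automatic once $\xtau$ is a bimodule). I would exploit two decompositions. Writing $\xtau\cdot p$ for the part of $\xtau$ supported on edges with source $p$, the inner product forces $\sca{\xtau\cdot p,\xtau\cdot p'}=0$ for $p\neq p'$, so for $\xi\in\xtau\cdot p$, $\eta\in\xtau\cdot p'$ one has $\Theta^{\xtau}_{\xi,\eta}=0$ unless $p=p'$; hence $\K(\xtau)=\bigoplus_{p}\K(\xtau\cdot p)$, refined further into the blocks $\K(X_{e'},X_e)$ over edges sharing a source. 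On the other hand $\ptau(a)$ leaves each summand $X_e$ invariant, acting there as $\phi_e(a_{r(e)})$; thus $\ptau(\atau)$ is exactly the family of edge-block-diagonal operators whose $e$-entry lies in $\phi_e(A_{r(e)})$ and is produced from a single element $a_{r(e)}\in A_{r(e)}$ shared by all edges of range $r(e)$. The whole argument is the comparison of these two pictures.

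For the forward direction, assume $\K(\xtau)\subseteq\ptau(\atau)$. If some $p\neq p_0$ carried distinct edges $e\neq e'$ with $s(e)=s(e')=p$, then, these modules being full (equivalence bimodules, resp.\ $X_1$), $\sca{X_{e'},X_{e'}}$ is dense in $A_p$ and the off-diagonal block $\K(X_{e'},X_e)$ is nonzero; being off-diagonal it cannot lie in $\ptau(\atau)$, forcing $|s^{-1}(p)|\le 1$. Likewise, if $p\neq p_0$ had $e\neq e'$ with $r(e)=r(e')=p$, both are equivalence bimodules, so $\phi_e,\phi_{e'}$ are isomorphisms onto the compacts; choosing $0\neq k\in\K(X_e)$ and writing $k=\ptau(a)$ forces $\phi_{e'}(a_p)=0$, whence $a_p=0$ and $k=\phi_e(a_p)=0$, a contradiction, so $|r^{-1}(p)|\le 1$. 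The reverse inequalities hold for every admissible tail, since it is $p_0$-accessible (so each $p\neq p_0$ emits an edge) and has no sources (so each receives one); together this gives $|s^{-1}(p)|=|r^{-1}(p)|=1$ for $p\neq p_0$. To see that $X$ is a Hilbert bimodule I would verify Lemma \ref{L:equiv via comp}(3): for $k\in\K(X)=\K(\xtau\cdot p_0)$ write $k=\ptau(a)$; as the loop and $e_1$ are the only edges with range $p_0$, one gets $k=\phi_X(a_{p_0})$ and $\phi_{X_1}(a_{p_0})=0$, so $a_{p_0}\in\phi_X^{-1}(\K(X))\cap\ker\phi_{X_1}$. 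The requirement $\ker\phi_{X_1}\subseteq(\ker\phi_X)^\perp$ then places $a_{p_0}$ in $J_X$, so $\phi_X$ maps $J_X$ onto $\K(X)$.

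For the converse, assume $X$ is a Hilbert bimodule and the degree conditions hold; then $|s^{-1}(p)|=1$ for every $p$ (the loop being the only edge at $p_0$), so $\K(\xtau)=\bigoplus_e\K(X_e)$ has no off-diagonal blocks and it suffices to place each $k\in\K(X_e)$ in $\ptau(\atau)$. For a tail equivalence bimodule $\phi_e$ is onto $\K(X_e)$ and, since $|r^{-1}(r(e))|=1$, there is no cross-talk, so $k=\ptau(a)$ with $a$ supported at $r(e)$. The only interaction is at $p_0$, where the loop $X$ and $e_1$ share the range $p_0$. Using Lemma \ref{L:equiv via comp}(2) for $X$, namely $\phi_X^{-1}(\K(X))=\ker\phi_X\oplus J_X$, together with $\phi_{X_1}^{-1}(\K(X_1))\subseteq\phi_X^{-1}(\K(X))$ and $J_X\subseteq\ker\phi_{X_1}$, I would deduce $\K(X_1)\subseteq\phi_{X_1}(\ker\phi_X)$ and $\K(X)=\phi_X(J_X)$ with $J_X\subseteq\ker\phi_{X_1}$; then given prescribed compacts $k_0$ on the loop and $k_1$ on $X_1$, a choice $x\in J_X$ with $\phi_X(x)=k_0$ and $b_0\in\ker\phi_X$ with $\phi_{X_1}(b_0)=k_1$ yields $b=x+b_0$ with $\phi_X(b)=k_0$ and $\phi_{X_1}(b)=k_1$, leaving every other block untouched. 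As $\ptau(\atau)$ is a $\ca$-subalgebra, hence closed, it is enough to treat finitely supported compacts, which are dense, giving $\K(\xtau)\subseteq\ptau(\atau)$.

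The main obstacle is precisely the coupling at $p_0$: because $\phi_X$ is not injective, the convenient criterion ``$\K(X)\subseteq\phi_X(A)$'' from Lemma \ref{L:equiv via comp} is unavailable, and the loop $X$ together with the entering edge $e_1$ are forced to interact through the single algebra $A_{p_0}$ via the left action. Arranging that one element of $A_{p_0}$ can independently implement a compact on $X$ and a compact on $X_1$ is exactly what the compatibility requirements $J_X\subseteq\ker\phi_{X_1}\subseteq(\ker\phi_X)^\perp$ and $\phi_{X_1}^{-1}(\K(X_1))\subseteq\phi_X^{-1}(\K(X))$ are designed to guarantee, and verifying this simultaneity is the technical heart of both implications.
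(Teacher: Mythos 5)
Your proposal is correct, and in the forward direction it takes a genuinely different route from the paper. To show that $X$ is a Hilbert bimodule, the paper argues operator-algebraically: Theorem \ref{T:main 3} converts the hypothesis into the statement that $\T_{\xtau}^+$ is Dirichlet, the full-corner identity $\O_X = Q\O_{\xtau}Q$ of \cite[Theorem 3.10]{KakKats11} together with \cite[Lemma 3.7]{KakKats11} transfers the Dirichlet property to $\T_X^+$, and Theorem \ref{T:main 3} is then applied a second time. You instead stay entirely at the level of the correspondence: localizing the inclusion $\K(\xtau)\subseteq\ptau(\atau)$ at the block $X=\xtau\cdot p_0$ forces $a_{p_0}\in\phi_X^{-1}(\K(X))\cap\ker\phi_{X_1}$, and the linking condition $\ker\phi_{X_1}\subseteq(\ker\phi_X)^{\perp}$ places $a_{p_0}$ in $J_X$, so that $\phi_X|_{J_X}$ maps onto $\K(X)$ and Lemma \ref{L:equiv via comp}(3) applies. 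Your argument is more elementary and self-contained --- it uses only the injectivity of $\xtau$, not the Morita/corner structure of $\O_{\xtau}$ or the Dirichlet characterization --- at the cost of redoing by hand what Theorem \ref{T:main 3} packages; the paper's route is shorter given the machinery already in place. The remaining pieces coincide with the paper's: the degree conditions come from comparing off-diagonal compacts with the block-diagonal image $\ptau(\atau)$ when $|s^{-1}(p)|\geq 2$, and from the injectivity of the left action of an equivalence bimodule when $|r^{-1}(p)|\geq 2$; the converse is the same block-by-block verification of $\K(\xtau)\subseteq\ptau(\atau)$, with the only coupling at $p_0$ resolved via $\phi_X^{-1}(\K(X))=\ker\phi_X\oplus J_X$ and $J_X\subseteq\ker\phi_{X_1}$ (you merely handle the two $p_0$-blocks simultaneously where the paper treats them separately). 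Your explicit observation that $p_0$-accessibility and the absence of sources supply the lower bounds $|s^{-1}(p)|\geq 1$ and $|r^{-1}(p)|\geq 1$ is a detail the paper leaves implicit.
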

\begin{proof}
If $\xtau$ is a Hilbert bimodule then $\T^+_{\xtau}$ is a Dirichlet algebra, by Theorem \ref{T:main 3}. Hence, if $Q$ is the projection provided by \cite[Theorem 3.10]{KakKats11} such that $\O_X = Q \O_{\xtau} Q$, then
\begin{align*}
\O_X & = Q \O_{\xtau} Q
  = Q \bigg(\overline{ \T_{\xtau}^+ + (\T_{\xtau}^+)^*} \bigg)Q \\
& = \overline{ Q(\T_{\xtau}^+) Q + Q(\T_{\xtau}^+)^* Q }
  = \overline{\T_X^+ + (\T_X^+)^*},
\end{align*}
where the last equation is deduced by \cite[Lemma 3.7]{KakKats11}. Thus $\T_X^+$ is also a Dirichlet algebra. Hence, $X$ is a Hilbert bimodule. For the second part, assume that there is a $p \neq p_0$ such that $|s^{-1}(p)| \geq 2$. Then at the vertex $p$ we would have (at least) the following picture
\begin{align*}
\xymatrix{ A_q  & \\
A_w  & A_p \ar@/_/_{X_r}[ul] \ar_{X_f}[l]}
\end{align*}
Let $u_r \in X_r$ and $u_f \in X_f$. Then
\begin{align*}
\Theta^{\xtau}_{u_r \chi_r, u_f \chi_f} (v_f \chi_f)= (u_r
\sca{u_f,v_f}_{A_p})\chi_r, \foral v_f \in X_f.
\end{align*}
Since $X_r \sca{X_f,X_f}_{A_p}$ cannot be zero (as $\sca{X_f,X_f} = A_p$), we can choose $u_r$ and $u_f$ such that the compact operator $\Theta^{\xtau}_{u_r \chi_r, u_f \chi_f}$ is not trivial. Since $\xtau$ is a bimodule there is an $(a_e) \in A_\tau$ such that $\phi_\tau( (a_e))= \Theta^{\xtau}_{u_r \chi_r, u_f \chi_f}$. But then
\begin{align*}
X_f \ni \phi_{X_f} (a_w) v_f \chi_f
& = \phi_\tau( (a_e) ) v_f\chi_f\\
& = \Theta^{\xtau}_{u_r \chi_r, u_f \chi_f} (v_f \chi_f)\\
& = (u_r\sca{u_f,v_f}_{A_p})\chi_r \in X_r,
\end{align*}
for all $v_f \in X_f$, which is absurd. On the other hand if there was a vertex $p$ such that $|r^{-1}(p)| \geq 2$ then we would have (at least) the following picture
\begin{align*}
\xymatrix{ A_q \ar@/^/^{X_r}[dr]  & \\
A_w \ar_{X_f}[r] & A_p}
\end{align*}
Pick $u_r, v_r \in X_r$ such that $u_r\sca{v_r,v_r} \neq 0$. Since $\xtau$ is a bimodule, then there is an element $(a_e) \in A_\tau$ such that $\phi_\tau( (a_e) )= \Theta^{\xtau}_{ u_r \chi_r, v_r \chi_r}$, for $u_r, v_r \in X_r$. Hence,
\begin{align*}
\phi_\tau( (a_e) ) (v_f \chi_f)= \Theta^{\xtau}_{ u_r \chi_r, v_r
\chi_r}(v_f \chi_f)= u_r\chi_r \sca{v_r \chi_r, v_f \chi_f}_{\xtau}= 0,
\end{align*}
for all $v_f \in X_f$. But $\phi_\tau((a_e)) (v_f)= \phi_{X_f}(a_p\chi_p) (v_f)$. Thus $a_p \in \ker\phi_{X_f}$, which is the trivial ideal since $X_f$ is an equivalence bimodule, and so $a_p=0$. In the same time
\begin{align*}
0
  = \phi_{X_r}(a_p) v_r \chi_r
&  = \phi_\tau( (a_e) ) (v_r \chi_r)\\
& = \Theta^{\xtau}_{ u_r \chi_r, v_r \chi_r}(v_r \chi_r)\\
& = u_r\chi_r \sca{v_r \chi_r, v_r \chi_r}_{\xtau}
  = u_r\sca{v_r, v_r}_{X_r} \chi_r
\end{align*}
which is a contradiction.

For the converse, in view of Lemma \ref{L:equiv via comp} and since $\xtau$ is injective, it suffices to prove that $\K(\xtau) \subseteq \phi_\tau(A_\tau)$. Since $|s^{-1}(p)|=|r^{-1}(p)|=1$ for every $p \neq p_0$ it is straightforward that $\K(X_e, X_f)= (0)$ for $e\neq f$, when $X_e$ and $X_f$ are viewed as Hilbert submodules of $\xtau$. Thus $\K(\xtau)$ is the closure of the linear span of operators in $\K(X_e)$ for $e \in \edg$. In particular $\xtau$ can be written as the following graph correspondence
\begin{align*}
\xymatrix{ \bullet^A \ar@(l,u)[]^X & \bullet^{A_1} \ar@/_/[l]_{X_1}
& \bullet^{A_2} \ar@/_/[l]_{X_2} & \cdots \ar@/_/[l]_{X_3}}
\end{align*}
hence $\phi_\tau( a,a_1, a_2, \dots) = (\phi_X(a), \phi_{X_1}(a), \phi_{X_2}(a_1), \dots)$. For the imprimitivity bimodules $X_n \neq X, X_1$ and $k \in \K(X_n)$ there is an $a_{n-1} \in A_{n-1}$ such that $\phi_n (a_{n-1})= k$. Thus,
\begin{align*}
(k \chi_n)(\xi, (u_n))= k u_n \chi_n= \phi_n(a_{n-1}) u_n \chi_n=
\phi_\tau( a_{n-1} \chi_{n-1})(\xi, (u_n)),
\end{align*}
for all $\xi \in X, u_n \in X_n$, therefore $\K(X_n) \subseteq \phi_\tau(A_\tau)$ for all $n \neq 1$. Also, since $X$ is a Hilbert bimodule, for $k \in \K(X)$ there is an $a\in J_X$ such that $\phi_X(a)=k$. By the linking condition $J_X \subseteq \ker \phi_{X_1}$,
for $X$ and $X_1$ in the definition of $X_\tau$ we have that $\phi_{X_1}(a)=0$. Hence,
\begin{align*}
k(\xi, (u_n))= k \xi \chi_0= \phi_X(a) \xi \chi_0=
\phi_\tau( a, 0)(\xi, (u_n)),
\end{align*}
for all $\xi \in X, u_n \in X_n$, thus $\phi_\tau(a,0)=k$ and so $\K(X) \subseteq \phi_\tau(A_\tau)$. Finally, let a compact operator $k\in \K(X_1)$. Since
\[
\K(X_1) \subseteq \phi_{X_1}(A) \text{ and } \phi^{-1}_{X_1}(\K(X_1)) \subseteq \phi^{-1}_X( \K(X)),
\]
by definition of $\xtau$, there is an $a\in \phi^{-1}_X(\K(X))$ such that $\phi_{X_1}(a)= k$. Since $X$ is a Hilbert bimodule we can write $a= b +c$ for some $b\in \ker\phi_X$ and $c\in J_X$, by Lemma \ref{L:equiv via comp}. Recall that $J_X \subseteq \ker\phi_{X_1}$, thus $k=\phi_{X_1}(a)=\phi_{X_1}(b) + \phi_{X_1}(c)= \phi_{X_1}(b)$. Hence,
\begin{align*}
\phi_\tau(b,(0))= (\phi_X(b), \phi_{X_1}(b), (0))= (0, \phi_{X_1}(b), (0) ) = (0, \phi_{X_1}(a), (0) )= k \chi_1,
\end{align*}
therefore $\K(X_1) \subseteq \phi_\tau(A_\tau)$. Thus $\K(\xtau) \subseteq \phi_\tau(A_\tau)$ and the proof is complete.
\end{proof}

%%%%%%%%%%%%%%%%%%%%%%%%%%%%%%%%
\subsection{Graphs and $\ca$-dynamical systems}
%%%%%%%%%%%%%%%%%%%%%%%%%%%%%%%%

Let us apply Theorem \ref{T:main 4} to three fundamental examples of $\ca$-correspondences associated to row-finite graphs, $\ca$-dynamical systems and multivariable $\ca$-dynamics.

In the context of graph theory, the Cuntz-Pimsner algebra $\O_\G$ associated to a graph is exactly the Cuntz-Krieger algebra $\ca(\G)$ \cite{BatHonRaeSzy02,RaeSzy04}, i.e., the universal $\ca$-algebra generated by a set of mutual orthogonal projections $p_v, v\in \G^{(0)}$ and a set of partial isometries $s_e, e\in \G^{(1)}$ such that
$s_e^*s_e  = p_{s(e)}$, for all $e\in \G^{(1)}$, and $p_v  = \sum_{e \in r^{-1}(v) } s_es_e^*$, when $0< |r^{-1}(v)| < \infty$. For a full discussion the reader is addressed to \cite[Section 8]{Raeb}.

%%%%%%%%%%%%%%%%%%%%%%%%%%%%%%%%
\begin{example}\label{E:graph dirichlet}
\textup{Consider a directed graph $\G=(\vrt,\edg,r,s)$ and form the $\ca$-correspondence $X_\G$ of the graph $\G$ (that is the graph correspondence associated to $\big\{\G, \{\bbC_p\}_{p \in \vrt}, \{\bbC_e\}_{e\in \edg} \big\}$). An element of $A_\G$ is in the kernel of the left action if and only if it is a source, that is $r^{-1}(p)=\emptyset$. Then, by Lemma \ref{L:equiv via comp} and Theorem \ref{T:main 4}, the tensor algebra $\T_{X_\G}^+$ is Dirichlet if and only the vertices of the graph emit and receive one or none edge. The only possible cases for $\G$ is then to be a finite path, an one-sided infinite path, a two-sided infinite path or a circular graph. (For example, the non-commutative disc algebras $\A_n$, are not Dirichlet.)}
\end{example}

Nevertheless the tensor algebra of a row-finite graph has the (weaker) unique extension property.

%%%%%%%%%%%%%%%%%%%%%%%%%%%%%%%%
\begin{theorem}\label{T:graphs}
The tensor algebra associated to a row-finite graph has the unique extension property.
\end{theorem}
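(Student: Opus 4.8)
The plan is to reduce the statement to the case of an \emph{injective} correspondence and then to read maximality directly off the Cuntz--Pimsner covariance relations. For a row-finite graph $\G$ the left action $\phi_\G$ fails to be injective precisely when $\G$ has sources, in which case $\ker\phi_\G = c_0(\{\bbC_p : r^{-1}(p) = \emptyset\})$ (cf.\ Example \ref{E:graph dirichlet}). I would first dispose of these sources by attaching the Muhly--Tomforde tail \cite{MuTom04} at each source, a special instance of the construction of \cite{KakKats11}, producing a row-finite graph $\G'$ with no sources and hence an injective graph correspondence $X_\tau := X_{\G'}$. By \cite[Theorem 3.10]{KakKats11} there is a full projection $Q$ with $\O_{X_\G} = Q\,\O_{X_{\G'}}\,Q$, and by \cite[Lemma 3.7]{KakKats11} the same compression gives $\T^+_{X_\G} = Q\,\T^+_{X_{\G'}}\,Q$. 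Since $\cenv(\T^+_{X_\G}) \simeq \O_{X_\G}$ and $\cenv(\T^+_{X_{\G'}}) \simeq \O_{X_{\G'}}$ by \cite[Theorem 3.7]{KatsKribs06}, I would transfer the unique extension property from the larger algebra to its full corner: any faithful representation $\Pi$ of $\O_{X_\G}$ is the compression by $Q$ of a faithful representation $\Pi'$ of $\O_{X_{\G'}}$; the latter restricts to a maximal representation of $\T^+_{X_{\G'}}$, and compressing this maximal representation by the gauge-invariant projection $Q$ yields a maximal representation of $\T^+_{X_\G}$. Thus it suffices to prove the theorem when $X_\G$ is injective.

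Next I would settle the injective case in full generality. When $\phi_X$ is injective one has $\ker\phi_X = 0$, so that $J_X = \phi_X^{-1}(\K(X))$ is the largest admissible ideal. Let $\Pi$ be any faithful representation of $\O_X$; it arises from a pair $(\pi,t) \in \cov(X)$ with $\pi$ faithful, so the covariance relation reads $\psi_t(\phi_X(a)) = \pi(a)$ for every $a \in \phi_X^{-1}(\K(X))$. This is exactly the condition that $(\pi,t)$ be an isometric, fully coisometric representation of $X$, which by the dilation theory for tensor algebras \cite{MS00,KatsKribs06,DrMc05} is precisely the condition that the induced representation of $\T^+_X$ be maximal. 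Hence the restriction of $\Pi$ to $\T^+_X$ is maximal, and since $\Pi$ was an arbitrary faithful representation of the $\ca$-envelope, $\T^+_X$ has the unique extension property. Note that this argument covers genuinely non-Dirichlet examples as well, e.g.\ the loop graph on one vertex yielding the non-commutative disc algebra.

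I expect the main obstacle to lie in the first paragraph rather than the second: transporting the unique extension property across the Morita context furnished by the full corner $Q\,\O_{X_{\G'}}\,Q = \O_{X_\G}$. The delicate point is that maximality of a representation is not obviously stable under compression, and one must exploit that $Q$ is fixed by the gauge action --- equivalently, that it sits in the diagonal of the Morita equivalence --- so that dilations of the compressed representation lift to dilations upstairs, where they are controlled by the unique extension property of $\T^+_{X_{\G'}}$; verifying this compatibility is where the real work sits. The injective case itself is then essentially bookkeeping, the only substantive inputs being the identification $J_X = \phi_X^{-1}(\K(X))$ and the maximality criterion for tensor-algebra representations.
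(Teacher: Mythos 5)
There is a genuine gap, and it sits exactly where you locate the ``bookkeeping'': the second paragraph. You assert that for an injective correspondence the covariance relation $\psi_t(\phi_X(a))=\pi(a)$, $a\in\phi_X^{-1}(\K(X))$, ``is precisely the condition that the induced representation of $\T^+_X$ be maximal,'' citing \cite{MS00,KatsKribs06,DrMc05}. No such equivalence appears in those references; it is essentially the statement you are trying to prove, and as you state it (for arbitrary injective $X$, with no row-finiteness) it is \emph{false}. Take the graph with one vertex and infinitely many loops: $X=\bigoplus_{i=1}^\infty\bbC$ over $\bbC$ is injective, $J_X=\phi_X^{-1}(\K(X))=0$, so $\O_X=\T_X$ and every representation is covariant; the Fock representation is faithful on $\O_X$, yet its restriction to $\T_X^+=\A_\infty$ is not maximal, because $\sum_iL_iL_i^*=I-P_0\neq I$ leaves room for a nontrivial dilation $S_i=\left[\begin{smallmatrix}L_i & a_i\\ 0&0\end{smallmatrix}\right]$ with $\sum_ia_ia_i^*\leq P_0$. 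You have also conflated ``$J_X$-coisometric'' with ``isometric and fully coisometric'': these coincide only when $\phi_X(A)\subseteq\K(X)$, which for a graph correspondence is exactly row-finiteness --- a hypothesis your argument never uses. The paper's proof, by contrast, is a direct $2\times2$ matrix computation on the generators of a maximal dilation $\nu$: the relation $\nu(s_e)^*\nu(s_e)=\nu(p_{s(e)})$ kills the lower-left corner, and the Cuntz--Krieger relation $p_v=\sum_{e\in r^{-1}(v)}s_es_e^*$ at $v=r(e)$ --- a \emph{finite} sum, which is where row-finiteness enters irreplaceably --- kills the upper-right corner.

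The first paragraph has its own unresolved issue, which you candidly flag: transferring maximality across the full corner $\O_{X_\G}=Q\,\O_{X_{\G'}}Q$ requires showing that compressing a maximal representation of $\T^+_{X_{\G'}}$ by $Q$ yields a maximal representation of $\T^+_{X_\G}$ (equivalently, that dilations of the compression lift), and this is not supplied. But the reduction is also unnecessary: in the paper's argument sources cause no difficulty, since the computation is carried out edge by edge and every edge $e$ has a range vertex $v=r(e)$ with $1\leq|r^{-1}(v)|<\infty$, so the needed Cuntz--Krieger relation is always available. Even granting the reduction, the argument does not close, because the injective case is exactly where the missing work lies.
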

\begin{proof}
Let a Cuntz-Krieger family $\{p_v, s_e \mid v\in \G^{(0)}, e\in \G^{(1)}\}$ that integrates to a faithful representation of $\O_\G$. Then $\T_\G^+ = \alg\{p_v, s_e \mid v\in \G^{(0)}, e\in \G^{(1)}\}$. Let a faithful representation $\rho\colon \O_\G \rightarrow \B(H)$ and a maximal dilation $\nu \colon \T_G^+ \rightarrow \B(K)$ of $\rho|_{\T_\G^+}$. The dilation $\nu$ extends uniquely to a faithful representation of $\O_\G$, which will be denoted by the same letter. Therefore the families
\[
\{ \rho(p_v), \rho(s_e) : v\in \vrt, e\in \edg\} \text{ and } \{ \nu(p_v), \nu(s_e) : v\in \vrt, e\in \edg\}
\]
are Cuntz-Krieger families. By \cite{DrMc05} it suffices to show that $H$ is $\nu(\T_G^+)$-invariant, i.e., $\nu$ is a trivial dilation.

First we remark that $\nu|_{\ca(p_v \mid v\in \G^{(0)})}$ is trivial as a dilation of the $\ca$-algebra $\ca(p_v \mid v\in \G^{(0)})$. Hence
\[
\nu(p_v) = \begin{bmatrix} \rho(p_v) & 0 \\ 0 & \ast \end{bmatrix},
\]
for all $v\in \G^{(0)}$. Let $e\in \G^{(1)}$ and assume that
\[
\nu(s_e) = \begin{bmatrix} \rho(s_e) & a \\ b & \ast \end{bmatrix}.
\]
Since $\nu$ is a representation of $\O_\G$ we obtain that $\nu(s_e)^*\nu(s_e) = \nu(p_{s(e)})$ and by equating the $(1,1)$-entries we get that $\rho(s_e)^*\rho(s_e) + b^*b = \rho(p_{s(e)})$.
But $\rho(s_e)^*\rho(s_e) = \rho(p_{s(e)})$ hence $b=0$. Also $\G$ is row-finite, hence there is a vertex $v$ such that $e \in r^{-1}(v)$ with $|r^{-1}(v)|< \infty$. Therefore there are edges $e_1, \dots, e_n$ such that $p_v= s_es_e^* + \sum_{i=1}^n s_{e_i}s_{e_i}^*$. Recall that $\nu$ is a maximal dilation therefore it extends to a $*$-representation of $\O_\G$. Hence, by applying $\nu$ we obtain
\begin{align*}
\begin{bmatrix} \rho(p_v) & 0 \\ 0 & \ast \end{bmatrix}
=
\begin{bmatrix} \rho(s_e) & a \\ 0 & \ast \end{bmatrix}
\cdot
\begin{bmatrix} \rho(s_e)^* & 0 \\ a^* & \ast \end{bmatrix}
+
\sum_{i=1}^n
\begin{bmatrix} \rho(s_{e_i}) & a_i \\ 0 & \ast \end{bmatrix}
\cdot
\begin{bmatrix} \rho(s_{e_i})^* & 0 \\ a_i^* & \ast \end{bmatrix}
\end{align*}
By equating the $(1,1)$-entries in the above equation we get
\[
\rho(p_v) = \rho(s_e)\rho(s_e)^* + aa^* + \sum_{i=1}^n \left(\rho(s_{e_i}) \rho(s_{e_i})^* + a_ia_i^*\right).
\]
By assumption $\rho$ is in turn a representation of $\O_\G$, hence
\[
\rho(p_v) = \rho(s_e)\rho(s_e)^* + \sum_{i=1}^n \rho(s_{e_i}) \rho(s_{e_i})^*.
\]
Thus $aa^* + \sum_{i=1}^n a_ia_i^*=0$, hence $a=0$. Therefore
\[
\nu(s_e) = \begin{bmatrix} \rho(s_e) & 0 \\ 0 & \ast \end{bmatrix}
\]
for every $e\in \G^{(1)}$.

Since $\T_\G^+$ is generated by $p_v$, for $v\in \G^{(0)}$, and $s_e$, for $e\in \G^{(1)}$, we obtain that $H$ is $\nu(\T_G^+)$-invariant and the proof is complete.
\end{proof}

A second example of tensor algebras is Peters' semicrossed product associated to a $\ca$-dynamical system $(A,\ga)$. Semicrossed products were initiated by Arveson \cite{Arv1} and formally defined by Peters \cite{Pet2}. They have been investigated by various authors \cite{ArvJ,McAsMuhSai79,HadH,Pow,MS00,DK08,DKsimple,Dun08,KakKat10,DavKak11} and Muhly and Solel \cite{MS} give the connection with a $\ca$-correspondence structure.

%%%%%%%%%%%%%%%%%%%%%%%%%%%%%%%%
\begin{example}
\textup{Given a $*$-endomorphism $\ga$ of a $\ca$-algebra $A$ let the $\ca$-correspondence $X_A$, where $X=A$ is the trivial Hilbert $A$-module and the left action is defined by $\phi_X(a)(\xi)=\ga(a)\xi$ for all $a\in A$, $\xi \in X_A$.
The tensor algebra $\T_{X_A}^+ \equiv A \times_\ga \bbZ_+$ is \emph{Peter's semicrossed product of $A$ by $\ga$}.}

\textup{In view of Lemma \ref{L:equiv via comp} and Theorem \ref{T:main 4}, a semicrossed product is Dirichlet if and only if $\ker\ga$ is orthocomplemented in $A$ and $\ga$ is onto $A$, since $\K(A)=A$.  In particular, when $\ga$ is injective we deduce that the semicrossed product is Dirichlet if and only if $\ga$ is onto (thus a $*$-isomorphism). Therefore \cite[Proposition 3]{Dun08} which states that semicrossed products of injective (not necessarily onto) dynamical systems are Dirichlet, is false.}
\end{example}

Nevertheless the semicrossed products of injective dynamical systems have the unique extension property. Recall that an injective dynamical system $(A,\ga)$ extends to the automorphic dynamical system $(A_\infty,\ga_\infty)$ \cite{Sta93}
\begin{align*}
 \xymatrix{
  A \ar[r]^{\ga} \ar[d]^\ga &
  A \ar[r]^{\ga} \ar[d]^\ga &
  A \ar[r]^{\ga} \ar[d]^\ga &
  \cdots \ar[r] &
  A_\infty \ar[d]^{\ga_\infty} \\
  A \ar[r]^\ga &
  A \ar[r]^\ga &
  A \ar[r]^\ga &
  \cdots \ar[r] &
  A_\infty
 }
\end{align*}
Then by \cite[Theorem 2.5]{KakKat10} the $\ca$-envelope of $A\times_\ga \bbZ_+$ is the usual crossed product $A_\infty \rtimes_{\ga_\infty} \bbZ$. The proof of the next result is due to Elias Katsoulis.

%%%%%%%%%%%%%%%%%%%%%%%%%%%%%%%%
\begin{theorem}\label{T:main 5}
Let $(A,\ga)$ be an injective unital $\ca$-dynamical system. Then the semicrossed product $A\times_{\ga} \bbZ_+$ has the unique extension property.
\end{theorem}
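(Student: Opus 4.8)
The plan is to mimic the block-matrix argument of Theorem \ref{T:graphs}, exploiting the fact that in the $\ca$-envelope the generating isometry of the semicrossed product becomes a \emph{unitary}. Recall that $A \times_\ga \bbZ_+$ is the tensor algebra of the correspondence $X = A$ with $\phi_X(a)\xi = \ga(a)\xi$; let $V$ denote the universal generator coming from $t(1)$. Since $(A,\ga)$ is injective and unital, the covariance relation forces $VV^* = \pi(1) = 1$, so the image of $V$ in $\cenv(A \times_\ga \bbZ_+) = A_\infty \rtimes_{\ga_\infty} \bbZ$ (by \cite[Theorem 2.5]{KakKat10}) is a unitary; moreover $A \times_\ga \bbZ_+$ is generated, as an operator algebra, by the $\ca$-subalgebra $A$ (the initial copy inside $A_\infty$) together with this unitary $V$.

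First I would fix a faithful representation $\rho \colon A_\infty \rtimes_{\ga_\infty} \bbZ \to \B(H)$ and a maximal dilation $\nu \colon A \times_\ga \bbZ_+ \to \B(K)$ of $\rho|_{A \times_\ga \bbZ_+}$, writing $K = H \oplus H^\perp$. As in the proof of Theorem \ref{T:graphs}, maximality of $\nu$ guarantees (by \cite{DrMc05}) that $\nu$ extends to a $*$-representation of the crossed product, and it then suffices to show that $H$ is $\nu(A \times_\ga \bbZ_+)$-invariant, i.e. that $\nu$ is a trivial dilation. Since $A \times_\ga \bbZ_+$ is generated by $A$ and $V$, I only need to check invariance on these two pieces.

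For the $\ca$-subalgebra $A$, the restriction $\nu|_A$ is a $*$-representation dilating the $*$-representation $\rho|_A$; because $*$-representations of $\ca$-algebras are maximal, $H$ reduces $\nu(A)$, so $\nu(a) = \rho(a) \oplus \sigma(a)$ for each $a \in A$. For the generator I write
\[
\nu(V) = \begin{bmatrix} \rho(V) & x \\ y & z \end{bmatrix}
\]
with respect to $H \oplus H^\perp$, where the $(1,1)$-entry is the compression $\rho(V)$, a unitary on $H$. Now $V$ is a unitary in the $\ca$-envelope and $\nu$ is a $*$-representation of it, so $\nu(V)$ is a unitary on $K$. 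Equating the $(1,1)$-entries of $\nu(V)^*\nu(V) = 1$ gives $\rho(V)^*\rho(V) + y^*y = 1$, and since $\rho(V)^*\rho(V) = 1$ we get $y=0$; equating the $(1,1)$-entries of $\nu(V)\nu(V)^* = 1$ gives $\rho(V)\rho(V)^* + xx^* = 1$, whence $x=0$. Thus $\nu(V) = \rho(V) \oplus z$ and $H$ reduces $\nu(V)$ as well.

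Combining the two computations, $H$ is reducing for the generators of $A \times_\ga \bbZ_+$, so $\nu$ is a trivial dilation and $\rho|_{A \times_\ga \bbZ_+}$ is maximal; as $\rho$ was an arbitrary faithful representation of $\cenv(A \times_\ga \bbZ_+)$, the semicrossed product has the unique extension property. The step I expect to carry the real weight is the identification of the image of $V$ as a \emph{unitary} in the envelope: this is exactly where injectivity and unitality of $(A,\ga)$ enter (through $VV^* = \pi(1) = 1$ and the description $\cenv(A \times_\ga \bbZ_+) = A_\infty \rtimes_{\ga_\infty} \bbZ$), and it is what upgrades the one-sided isometry argument of the graph case into the cleaner two-sided unitary computation above.
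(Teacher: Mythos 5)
Your proposal is correct and follows essentially the same route as the paper: identify the generator as a unitary $U$ in $\cenv(A\times_\ga\bbZ_+)=A_\infty\rtimes_{\ga_\infty}\bbZ$ (using injectivity and unitality), note that $A\times_\ga\bbZ_+$ is generated by $A$ and $U$, and conclude that any dilation is trivial because dilations of unitaries and of $*$-representations of $\ca$-algebras are trivial. The only cosmetic difference is that you pass to a maximal dilation and carry out the $2\times 2$ corner computation explicitly, whereas the paper observes directly that \emph{any} contractive dilation of the unitary $\rho(U)$ is already trivial.
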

\begin{proof}
Since $(A,\ga)$ is unital, then $A\times_\ga \bbZ_+$ is generated by $A$ and the unitary $U$ in the crossed product $A_\infty \rtimes_{\ga_\infty} \bbZ$. Let $\rho\colon A_\infty \rtimes_{\ga_\infty} \bbZ \rightarrow \B(H)$ be a faithful $*$-representation. Then $\rho(U)$ is again a unitary. If $\nu$ is a dilation of $\rho|_{A \times_\ga \bbZ_+}$, then $\nu(U)$ is a dilation of the unitary $\rho(U)$ hence trivial. Also $\nu|_A$ is a dilation of the $*$-representation $\rho|_A$. Hence in both cases $H$ is $\nu(A\times_\ga \bbZ_+)$-invariant, thus $\rho$ is maximal \cite{DrMc05}.
\end{proof}

The same result is obtained for a third sub-class of independent interest. Let $\{\ga_i\}$ be a family of $n$ $*$-endomorphisms of a $\ca$-algebra $A$. The associated $\ca$-correspondence $X_{(A,\ga)}$ is the interior direct sum $\oplus_i A$ where the left action is defined by $\phi_{X_{(A,\ga)}}(a)(\oplus_i \xi_i)= \oplus_i (\ga_i(a)\xi_i)$, for $a\in A$ and $\xi_i \in A$ \cite{MS,DavKat}. Note that $X_{(A,\ga)}$ admits an orthogonal basis $\{e_i\}_{i=1}^n$, i.e., a vector $\xi \in X_{(A,\ga)}$ is written as an orthogonal sum $\sum_i \xi_i e_i$, for some $\xi_i \in A$.

We remark that recently Cornelissen and Marcolli use the theory of multivariable (automorphic) dynamical systems of Davidson and Katsoulis \cite{DavKat} in the context of number theory (see \cite[Theorem 2]{C} and \cite[Section 6]{CM}) and the reconstruction of graphs (see \cite[Theorem 1.5]{CM2} and the proof of \cite[Theorem 1.6]{CM2}).

%%%%%%%%%%%%%%%%%%%%%%%%%%%%%%%%
\begin{theorem}\label{T:auto}
Let $\{\ga_i\}$ be a family of automorphisms of $A$. Then the tensor algebra of $X_{(A,\ga)}$ has the unique extension property.
\end{theorem}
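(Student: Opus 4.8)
The plan is to mimic the proofs of Theorem \ref{T:graphs} and Theorem \ref{T:main 5}: I fix a faithful representation $\rho$ of the $\ca$-envelope $\O_X$ (here $X:=X_{(A,\ga)}$, and $\cenv(\T_X^+)\simeq\O_X$ by Katsoulis--Kribs) and show that every maximal dilation of $\rho|_{\T_X^+}$ is a trivial dilation; by \cite{DrMc05} this is precisely the unique extension property. Note that the Dirichlet route of Theorem \ref{T:main 3} is unavailable: for $n\geq2$ one has $\K(X)\cong M_n(A)$ while $\phi_X(A)$ is only the ``diagonal'' copy of $A$, so $\K(X)\not\subseteq\phi_X(A)$, $X$ is not a Hilbert bimodule, and $\T_X^+$ is not Dirichlet. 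Hence the direct dilation argument is forced, just as for the non-commutative disc algebras in Example \ref{E:graph dirichlet}.

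First I record the relations in $\O_X$. Since each $\ga_i$ is an automorphism, $\phi_X$ is injective and, with $A$ acting on $X=\oplus_iA$ by diagonal left multiplications, $\phi_X(A)\subseteq\K(X)$; therefore $J_X=A$ and every $(\pi,t)\in\cov(X)$ satisfies $\pi(a)=\psi_t(\phi_X(a))$ for all $a\in A$. Writing $S_i:=t(e_i)$ for the orthogonal basis $\{e_i\}_{i=1}^n$, the Toeplitz relations give $S_i^*S_j=\delta_{ij}\pi(1)$, and since $\phi_X(a)=\sum_i\Theta^X_{e_i\ga_i(a),e_i}$ the covariance relation unwinds to $\pi(a)=\sum_iS_i\pi(\ga_i(a))S_i^*$; taking $a=1$ gives the Cuntz-type identity $\sum_iS_iS_i^*=\pi(1)$. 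Thus $\O_X$ is generated by a copy of $A$ together with isometries $S_1,\dots,S_n$ having orthogonal ranges summing to $\pi(1)$, and $\T_X^+=\alg\{\pi(a),S_i\mid a\in A,\ 1\le i\le n\}$.

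Now let $\nu\colon\T_X^+\to\B(K)$ be a maximal dilation of $\rho|_{\T_X^+}$; by maximality it extends to a $*$-representation of $\O_X$. As in the earlier proofs, $\nu|_A$ dilates the $*$-representation $\rho|_A$ and is therefore trivial, so with respect to $K=H\oplus H^\perp$ we have
\[
\nu(\pi(a))=\begin{bmatrix}\rho(\pi(a))&0\\0&\ast\end{bmatrix},\qquad
\nu(S_i)=\begin{bmatrix}\rho(S_i)&a_i\\b_i&\ast\end{bmatrix}.
\]
Equating $(1,1)$-entries in $\nu(S_i)^*\nu(S_i)=\nu(\pi(1))$ and using that $\rho(S_i)$ is already an isometry forces $b_i^*b_i=0$, so $b_i=0$. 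Then equating $(1,1)$-entries in $\sum_i\nu(S_i)\nu(S_i)^*=\nu(\pi(1))$ against $\sum_i\rho(S_i)\rho(S_i)^*=\rho(\pi(1))$ gives $\sum_ia_ia_i^*=0$, whence each $a_i=0$. Thus every $\nu(S_i)$, and together with $\nu(\pi(a))$ then all of $\nu(\T_X^+)$, leaves $H$ invariant; the dilation is trivial, $\rho|_{\T_X^+}$ is maximal, and the unique extension property follows.

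The part requiring care is the honest identification of the covariance relation with $\sum_iS_iS_i^*=\pi(1)$ and the bookkeeping when $A$ is non-unital. There one replaces $S_i$ by the component maps $t_i\colon A\to\O_X$, $t_i(a)=t(e_i^{(a)})$, with $t_i(a)^*t_j(b)=\delta_{ij}\pi(a^*b)$ and $\pi(a)=\lim_\lambda\sum_it_i(\ga_i(a)u_\lambda)t_i(u_\lambda)^*$ along an approximate unit $(u_\lambda)$ of $A$; the isometry and positivity computations above are then run on the $t_i(u_\lambda)$ and a limit taken. Everything else is a direct transcription of the graph argument.
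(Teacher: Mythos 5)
Your proof is correct and follows essentially the same route as the paper: both identify the Cuntz-type relation coming from covariance (the paper writes it as $\phi_X(\ga_1^{-1}(\xi\xi^*)) = \Theta^X_{\xi e_1,\xi e_1} + \sum_{i\geq 2}\Theta^X_{\xi_i e_i,\xi_i e_i}$ for general $\xi$, you specialize to $\sum_i S_iS_i^* = \pi(1)$ in the unital case and sketch the approximate-unit variant) and then run the $2\times 2$ corner-chasing argument of Theorem \ref{T:graphs} on a maximal dilation. The paper's own proof is just a terser version of exactly this, deferring the dilation bookkeeping to the graph case.
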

\begin{proof}
First notice that any element $\xi \in X_{(A,\ga)}$ can be written as a linear combination of some $\xi_i e_i$ where every $\xi_i \in A^+$. Since $\ga_i$ are onto, then for any $\xi \xi^* \in A^+$, there are $a\in A$ and $\xi_i \in A$ for $i=2,\dots, n$, such that
\[
\phi_X(a) = \Theta^X_{\xi e_1,\xi e_1} + \sum_{i=2}^n \Theta^X_{\xi_i e_i, \xi_i e_i};
\]
namely $a = \ga_1^{-1}(\xi \xi^*)$ and $\xi_i\xi_i^*=\ga_i(\ga_1^{-1}(\xi\xi^*)$. The rest of the proof follows as in Theorem \ref{T:graphs}, repeated for $i=1, 2, \dots, n$.
\end{proof}

The combination of the proofs of Theorem \ref{T:graphs} and Theorem \ref{T:auto} imply the following result. The proof is left to the reader.

%%%%%%%%%%%%%%%%%%%%%%%%%
\begin{theorem}
Let $X_\G$ be a graph correspondence over $A_\G$ associated to a family $\big\{\G, \{A_p\}_{p \in \vrt}, \{X_e\}_{e\in \edg} \big\}$,
such that $\G$ is a row-finite graph and every $X_e$ is an equivalence bimodule. Then the tensor algebra of $X_\G$ has the unique extension property.
\end{theorem}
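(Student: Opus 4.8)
The plan is to follow the dilation-theoretic scheme of Theorem \ref{T:graphs} and Theorem \ref{T:auto}, substituting the covariance (Cuntz--Krieger) relations of $\O_{X_\G}$ for the graph relations, with the row-finiteness of $\G$ playing the role of the finite Cuntz--Krieger sum and the equivalence-bimodule hypothesis on each $X_e$ playing the role of surjectivity of the $\ga_i$. Recall that $\cenv(\T_{X_\G}^+)\simeq \O_{X_\G}$ by \cite{KatsKribs06}; write $(\pi,t)$ for a covariant representation integrating to a faithful representation of $\O_{X_\G}$, so that $\T_{X_\G}^+=\overline{\alg}\{\pi(A_\G),t(X_\G)\}$. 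To establish the unique extension property I would fix a faithful representation $\rho\colon\O_{X_\G}\to\B(H)$ and a maximal dilation $\nu\colon\T_{X_\G}^+\to\B(K)$ of $\rho|_{\T_{X_\G}^+}$; being maximal, $\nu$ extends to a $*$-representation of $\O_{X_\G}$, denoted by the same letter. As in Theorem \ref{T:graphs}, by \cite{DrMc05} it suffices to show that $H$ reduces $\nu$, i.e.\ that $\nu$ is a trivial dilation.

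First, restricting $\nu$ to the $\ca$-subalgebra $\pi(A_\G)$ yields a $*$-representation dilating the $*$-representation $\rho|_{\pi(A_\G)}$; the standard fact that a $*$-representation whose compression remains multiplicative leaves the subspace reducing gives
\[
\nu(\pi(a)) = \begin{bmatrix} \rho(\pi(a)) & 0 \\ 0 & \ast \end{bmatrix}, \qquad a\in A_\G,
\]
with respect to $K=H\oplus H^\perp$. Since $t$ is linear and continuous and $X_\G$ is the closed span of $\bigcup_e X_e$, it suffices to treat $\xi$ lying in a single summand $X_e$. Writing $\nu(t(\xi))=\begin{bmatrix}\rho(t(\xi)) & c_\xi \\ d_\xi & \ast\end{bmatrix}$, applying $\nu$ to $t(\xi)^*t(\xi)=\pi(\sca{\xi,\xi})$ and comparing $(1,1)$-entries yields $\rho(t(\xi))^*\rho(t(\xi))+d_\xi^*d_\xi=\rho(\pi(\sca{\xi,\xi}))$; as $\rho$ already satisfies this relation, $d_\xi=0$. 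This reproduces verbatim the first half of the proof of Theorem \ref{T:graphs}.

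The crux is to kill the corner $c_\xi$. Fix an edge $e$, put $p=r(e)$, and take $\xi\in X_e$. Since $X_e$ is an equivalence bimodule, $\Theta^{X_e}_{\xi,\xi}=\phi_e(\lsca{\xi,\xi}_{X_e})$; set $a=\lsca{\xi,\xi}_{X_e}\,\chi_p\in A_\G$. As $p$ receives the edge $e$ it is not a source, so $a\in(\ker\phi_\G)^\perp$, and because $|r^{-1}(p)|<\infty$ the operator $\phi_\G(a)$ is a finite sum of compacts acting on the mutually orthogonal summands $X_f$ with $r(f)=p$; hence $a\in J_{X_\G}$. The covariance relation of Katsura \cite{Kats04} then reads
\[
\pi(a)=\psi_t(\phi_\G(a))=t(\xi)t(\xi)^* + \psi_t(k), \qquad k=\sum_{\substack{f\neq e\\ r(f)=p}}\phi_f(\lsca{\xi,\xi}_{X_e})\geq 0.
\]
Since $k$ is a positive element of $\K(X_\G)$ it is a norm-convergent sum $k=\sum_j\Theta^{X_\G}_{\eta_j,\eta_j}$, so the right-hand side becomes a convergent sum $\sum_\zeta t(\zeta)t(\zeta)^*$ in which $\zeta=\xi$ occurs. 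This is the generalized Cuntz--Krieger identity: in the pure graph case it recovers $p_v=\sum_{e\in r^{-1}(v)}s_es_e^*$, and for multivariable dynamics it recovers the identity $a=\ga_1^{-1}(\xi\xi^*)$ of Theorem \ref{T:auto}.

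Finally, applying $\nu$ and comparing $(1,1)$-entries, while recalling that $\nu(\pi(a))$ is block-diagonal and that every $\nu(t(\zeta))$ has vanishing lower-left corner, gives $\rho(\pi(a))=\sum_\zeta\big(\rho(t(\zeta))\rho(t(\zeta))^*+c_\zeta c_\zeta^*\big)$; subtracting the same relation applied under $\rho$ leaves $\sum_\zeta c_\zeta c_\zeta^*=0$, so $c_\xi c_\xi^*=0$ and $c_\xi=0$. Thus every $\nu(t(\xi))$ and every $\nu(\pi(a))$ is block-diagonal, $H$ reduces $\nu(\T_{X_\G}^+)$, the dilation is trivial, and $\rho|_{\T_{X_\G}^+}$ is maximal; as $\rho$ was arbitrary, $\T_{X_\G}^+$ has the unique extension property. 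The only genuine obstacle lies in the third paragraph: verifying that $a$ lands in $J_{X_\G}$ and assembling the covariance relation into the form $\pi(a)=\sum_\zeta t(\zeta)t(\zeta)^*$, which is exactly where row-finiteness and the equivalence-bimodule structure are indispensable; the block-triangular computations and the concluding positivity argument are routine adaptations of the two cited proofs.
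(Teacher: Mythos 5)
Your proof is correct and follows exactly the route the paper intends: it writes out the ``combination of the proofs of Theorem \ref{T:graphs} and Theorem \ref{T:auto}'' that the paper leaves to the reader, with the decisive step --- that $a=\lsca{\xi,\xi}_{X_e}\chi_{r(e)}$ lies in $J_{X_\G}$ (row-finiteness gives compactness of $\phi_\G(a)$, injectivity of each $\phi_f$ forces $\ker\phi_\G$ to be supported on sources) so that Katsura covariance yields $\pi(a)=t(\xi)t(\xi)^*+\psi_t(k)$ with $k\geq 0$ --- correctly identified and justified. The only cosmetic imprecision is the claim that a positive compact $k$ is a norm-convergent series $\sum_j\Theta_{\eta_j,\eta_j}$; in general one should instead write $k=\lim_\lambda k^{1/2}e_\lambda k^{1/2}=\lim_\lambda\sum_i\Theta_{k^{1/2}\zeta_i,k^{1/2}\zeta_i}$ as a norm limit of finite sums of rank-one positives, which is all your positivity comparison of $(1,1)$-corners requires.
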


%%%%%%%%%%%%%%%%%%%%%%%%%
\subsection{The Counterexample}
%%%%%%%%%%%%%%%%%%%%%%%%%

In what follows we give an example of a Dirichlet algebra that is not a tensor algebra of a $\ca$-correspondence. This answers the question raised in the context of Davidson and Katsoulis \cite{DavKat11-2} concerning tensor algebras and semi-Dirichlet algebras.

%%%%%%%%%%%%%%%%%%%%%%%%%
\begin{proposition}\label{P:C-corre}
A $\ca$-correspondence $X$ over $\bbC$ is a Hilbert bimodule if and only if $X$ is the trivial $\ca$-correspondence $\bbC$ over $\bbC$.
\end{proposition}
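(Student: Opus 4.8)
The plan is to reduce the statement to an elementary fact about $\K(H)$ via Lemma \ref{L:equiv via comp}. First I would spell out what a $\ca$-correspondence over $\bbC$ is: a right Hilbert $\bbC$-module is just a Hilbert space $H$, with the right action being scalar multiplication and $\sca{\cdot,\cdot}$ the inner product, and a $*$-homomorphism $\phi_X\colon \bbC \to \L(X) = \B(H)$ is completely determined by the projection $P := \phi_X(1)$, so that $\phi_X(\lambda) = \lambda P$ for every $\lambda \in \bbC$. Hence $\phi_X(\bbC) = \bbC P$ is a subspace of $\B(H)$ of dimension at most one, while $\K(X) = \K(H)$.

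The reverse implication is then immediate: if $X = \bbC$ is the trivial correspondence, then $\phi_X = \id$ is injective and $\K(\bbC) = \bbC = \phi_X(\bbC)$, so $X$ is a Hilbert bimodule by the ``in particular'' clause of Lemma \ref{L:equiv via comp}. For the forward implication I would take a nonzero Hilbert bimodule $X$ and split on whether $\phi_X$ is injective, i.e.\ whether $P \neq 0$. If $P \neq 0$, then $\phi_X$ is injective, so Lemma \ref{L:equiv via comp} yields $\K(H) \subseteq \phi_X(\bbC) = \bbC P$. This is where the whole argument sits: $\bbC P$ is at most one-dimensional, whereas as soon as $\dim H \geq 2$ the algebra $\K(H)$ contains the two linearly independent rank-one projections onto a pair of orthogonal lines, which cannot both lie in $\bbC P$. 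Therefore $\dim H \leq 1$, and since $P \neq 0$ this forces $\dim H = 1$ and $P = \id$, that is $X \cong \bbC$.

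It remains to dispose of the case $P = 0$, which I would handle by noting that then $J_X = \ker\phi_X^{\perp} \cap \phi_X^{-1}(\K(X)) = \{0\}$, so by part (3) of Lemma \ref{L:equiv via comp} the restriction of $\phi_X$ to $J_X$ is a $*$-isomorphism onto $\K(H)$ only when $\K(H) = \{0\}$, i.e.\ $H = 0$; this degenerate correspondence is excluded by the standing assumption that $X \neq 0$. The only genuine (and rather mild) obstacle is the dimension count in the second paragraph: one must observe that the one-dimensionality of $\phi_X(\bbC) = \bbC P$ is incompatible with the abundance of compact operators on any $H$ of dimension at least two. Everything else is bookkeeping about the structure of correspondences over $\bbC$.
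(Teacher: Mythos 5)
Your argument is correct and follows essentially the same route as the paper: both invoke Lemma \ref{L:equiv via comp} to force $\K(X)$ into the at most one-dimensional image of $\bbC$, and then observe that $\K(X)$ contains two linearly independent operators as soon as $X$ has two linearly independent vectors, so $X$ must be one-dimensional. Your explicit treatment of the degenerate case $\phi_X = 0$ is a minor extra care the paper leaves implicit, but it does not change the substance of the proof.
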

\begin{proof}
If $X$ is a Hilbert bimodule over $\bbC$, then the left action $\phi_X\colon \bbC \rightarrow \L(X)$ is a $*$-isomorphism onto $\K(X)$, by Lemma \ref{L:equiv via comp}.  If $X$ contained two elements $\xi$ and $\eta$ such that $\xi \notin \bbC \eta$, then $\K(X)$ would contain the linearly independent operators $\Theta^X_{\xi,\eta}$ and $\Theta^X_{\eta,\eta}$, which is a contradiction. Thus $X$ is generated linearly by one element. Therefore $X \simeq \bbC$, hence $\L(X) \simeq \bbC$ and $\phi_X = \id_\bbC$. The converse is trivial.
\end{proof}

For a compact subset $K$ of $\bbC$, let $P(K)$ be the closed algebra of the polynomials supported on $K$ and $R(K)$ be the closed algebra of the rational functions that have poles in $\bbC \setminus K$. A subalgebra of $\rC(K)$ is called \emph{a uniform algebra on $K$} if it contains the constant functions and it separates the points.

%%%%%%%%%%%%%%%%%%%%%%%%%
\begin{example}
\textup{The most common example of a polynomial algebra that is Dirichlet is the disc algebra $A(\bbD)$ whose $\ca$-envelope is $\mathrm{C}(\bbT)$. It is trivial to check that $A(\bbD)$ is also the tensor algebra of the trivial Hilbert bimodule $\bbC$.}
\end{example}

For our counterexample, let $K$ be the compact subset of $\bbC$ defined by
\[
K:= \{z\in \bbC: |z-1| \leq  1 \} \cup \{z\in \bbC: |z+1|\leq 1 \},
\]
and fix $\A:=P(K)$. Abstractly, though we won't need it, the completely contractive representations of $\A$ are induced by operators on a Hilbert space whose spectrum is contained in $K$.

%%%%%%%%%%%%%%%%%%%%%%%%%%%%%%%%
\begin{proposition}
Let $K$ be as above and $\A:=P(K)$. Then the $\ca$-envelope of $\A$ is $\rC(\partial K)$ and $\A$ is a Dirichlet algebra.
\end{proposition}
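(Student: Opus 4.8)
The plan is to treat $\A = P(K)$ as a (commutative) uniform algebra on $K$ and to compute its Shilov boundary, from which both assertions follow. The two closed discs $D_1 = \{\,|z-1|\le 1\,\}$ and $D_2 = \{\,|z+1|\le 1\,\}$ meet only at the origin, so $K = D_1 \cup D_2$ is connected and $\bbC \setminus K$ is a single unbounded region; hence $K$ is polynomially convex ($\widehat K = K$) and $\partial K = C_1 \cup C_2$ is the union of the two circles $C_i = \partial D_i$, which intersect only at $0$. By Runge's theorem $P(K) = R(K)$, and by Mergelyan's theorem $P(K)$ coincides with the algebra $A(K)$ of functions continuous on $K$ and holomorphic on $D_1^\circ \cup D_2^\circ$; I would record these at the outset and use them freely below.

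First I would identify $\cenv(\A)$. It is classical that for a uniform algebra the $\ca$-envelope is $\rC(\partial_S)$, where $\partial_S$ denotes the Shilov boundary, so it suffices to show $\partial_S = \partial K$. The maximum modulus principle applied on $\widehat K = K$ gives at once $\partial_S \subseteq \partial K$. For the reverse inclusion I would show that every $z_0 \in \partial K \setminus \{0\}$ is a peak point: such a $z_0$ lies at positive distance from the opposite disc, so a sufficiently small closed disc $\bar D(a,\rho)$ externally tangent to $C_i$ at $z_0$ satisfies $\bar D(a,\rho) \cap K = \{z_0\}$, and then the function $\rho/(z-a) \in R(K) = P(K)$, after rotating to make its value at $z_0$ equal to $1$, peaks at $z_0$. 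Since peak points lie in every boundary, $\partial K \setminus \{0\} \subseteq \partial_S$; as $\partial_S$ is closed and $\partial K \setminus \{0\}$ is dense in $\partial K$, this forces $\partial_S = \partial K$ and hence $\cenv(\A) = \rC(\partial K)$.

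It then remains to prove that $\A$ is Dirichlet, i.e. that $\mathrm{Re}\,P(K)$ is dense in the real-valued functions of $\rC(\partial K)$. Given a real $u \in \rC(\partial K)$ and $\eps > 0$, I would use that each disc algebra $A(D_i)$ is Dirichlet on $C_i$ to choose $G_i \in A(D_i)$ with $\|\mathrm{Re}\,G_i - u\|_{C_i} < \eps$. The point is to glue $G_1$ and $G_2$ into a single element of $A(K) = P(K)$, which requires them to agree at the junction $0$. I would normalize each $G_i$ by subtracting the imaginary constant $i\,\mathrm{Im}\,G_i(0)$ (which leaves the real part unchanged) and adding the real constant $u(0) - \mathrm{Re}\,G_i(0)$ (whose modulus is $< \eps$), producing $F_i \in A(D_i)$ with $F_i(0) = u(0)$ for both $i$ and $\|\mathrm{Re}\,F_i - u\|_{C_i} < 2\eps$. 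Because $D_1 \cap D_2 = \{0\}$ and $F_1(0) = F_2(0)$, the function $f$ equal to $F_i$ on $D_i$ is continuous on $K$ and holomorphic on its interior, so $f \in A(K) = P(K)$ and $\|\mathrm{Re}\,f - u\|_{\partial K} < 2\eps$. Letting $\eps \to 0$ yields the Dirichlet property.

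The main obstacle is precisely this gluing across the pinch point $0$: solving the Dirichlet problem on the two discs independently produces functions whose harmonic conjugates, and even whose values at $0$, need not match, so the pieces do not a priori define a continuous function on $K$. The resolution is the observation that both boundary values $\mathrm{Re}\,G_i(0)$ already approximate the single number $u(0)$, so the mismatch is of size $O(\eps)$ and can be absorbed into a constant correction without spoiling the approximation; everything else reduces to standard boundary theory of uniform algebras (Shilov boundary, Runge and Mergelyan, and the Dirichlet property of the disc algebra).
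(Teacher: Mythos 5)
Your proof is correct, and while the first half runs parallel to the paper's, the second half takes a genuinely different and more self-contained route. For the identification $\cenv(\A)\simeq \rC(\partial K)$, the paper also reduces to the Shilov boundary of the uniform algebra $P(K)=R(K)$ (via \cite[Corollary 15.17]{Pau02}), obtains $Y\subseteq\partial K$ from the maximum modulus principle, and then simply asserts that $\A=R(K)$ forces $Y=\partial K$; your externally tangent disc construction makes that last assertion explicit by exhibiting every point of $\partial K\setminus\{0\}$ as a peak point for $R(K)$ and then taking closures, which is exactly the standard fact being invoked. For the Dirichlet property the divergence is real: the paper quotes a general theorem from Conway's book \cite{Con} (if $\bbC\setminus K$ has finitely many components and the interior of $K$ is a union of simply connected sets, then $R(K)$ is Dirichlet), whereas you solve the problem by hand, approximating a real $u\in\rC(\partial K)$ on each circle using the Dirichlet property of the disc algebra, correcting by constants so the two pieces agree at the pinch point $0$, and gluing; the price is an appeal to Mergelyan's theorem to see that the glued function, which lies in $A(K)$, actually belongs to $P(K)$. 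Your argument is longer but elementary and makes the mechanism at the singular point $0$ transparent, while the paper's is a two-line citation of a stronger general result. Two small points worth recording if you write this up: the Dirichlet property you prove (density of $\mathrm{Re}\,\A$ in $C_{\bbR}(\partial K)$) is the classical uniform-algebra formulation, and you should note its equivalence with the operator-algebraic definition $\overline{\iota(\A)+\iota(\A)^*}=\cenv(\A)$ used in the paper (immediate since $\A+\overline{\A}$ is a self-adjoint subspace whose real part is $\mathrm{Re}\,\A$); and the continuity of the glued function at $0$ deserves the one-line check that both $F_i$ tend to the common value $u(0)$ along their respective discs.
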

\begin{proof}
The set $\bbC \setminus K$ is connected, hence by Runge's Theorem $\A:=P(K)=R(K)$. Moreover $P(K)$ is a uniform subalgebra of $\rC(K)$. Therefore there is a unique compact subset $Y$ of $K$ such that $\cenv(\A)=\rC(Y)$, where the embedding $\iota\colon \A \rightarrow \mathrm{C}(Y)$ is given by $\iota(f)=f|_Y$ (see \cite[Corollary 15.17]{Pau02}). By the maximal modulus principle, the set $Y$ is contained in $\partial K$. The fact that $\A =R(K)$ implies that $Y$ cannot be a proper subset of $\partial K$. Thus $\cenv(\A)\simeq \rC(\partial K)$.

Finally, the set $\bbC \setminus K$ contains finitely many components and the interior of $K$ is the union of two simply connected sets. Therefore $\A=R(K)$ is Dirichlet \cite{Con}.
\end{proof}

We will show that there is not a $\ca$-correspondence $X_A$ such that its tensor algebra $\T_X^+$ is completely isometrically isomorphic to $\A : = P(K)$. To reach contradiction, suppose that there is a completely isometric isomorphism $\rho\colon \T_X^+\rightarrow \A$ for some $\ca$-correspondence $X_A$.

The restriction of $\rho$ to $A \subseteq \T_X^+$ is an injective $*$-homomorphism, as a completely isometric homomorphism of a $\ca$-algebra. Note that, by construction $\A \cap \A^* = \bbC$, hence the only $\ca$-algebra in $\A$ is $\bbC$. Therefore $A$ must coincide with $\bbC$ via the restriction of $\rho$. Hence $X$ is a Hilbert $\bbC$-module.

Moreover $\T_X^+$ is a Dirichlet algebra being complete isometrically isomorphic to $\A$. Thus, by Theorem \ref{T:main 3}, the $\ca$-correspondence $X_A$ is a Hilbert bimodule, hence $X=\bbC$ by Proposition \ref{P:C-corre}. Therefore $\T_X^+ \simeq A(\bbD)$.

In particular the $\ca$-envelopes $\rC(\partial K) \simeq \cenv(\A)$ and $\rC(\bbT) \simeq \cenv(A(\bbD))$ must be $*$-isomorphic. This induces a homeomorphism between $\partial K$ and $\bbT$. Thus $\partial K \setminus \{0\}$ is homeomorphic to the interval $(0,1)$. The contradiction then follows since $\partial K$ is the union of the circles $C\big( (1,0), 1 \big)$ and $C\big( (-1,0), 1 \big)$, hence $\partial K \setminus \{0\}$ is not connected.

%%%%%%%%%%%%%%%%%%%%%%%%%%%%%%%%
\begin{remark}
\textup{In the same way, other counterexamples can be constructed by considering $K$ to be a union of finitely many compact subsets of $\bbC$ with no holes, such that every pair intersects at a single point on their boundaries and $\bbC \setminus K$ is connected.}
\end{remark}

%%%%%%%%%%%%%%%%%%%%%%%%%%%%%%%%
\begin{acknow}
The author would like to emphasize that this paper initially started as a joint project with Elias Katsoulis. Nevertheless, following the suggestion of Elias Katsoulis, it was decided for the paper to go as single-authored. The author would like to thank Elias Katsoulis for his suggestion and permission to include here material resulting from conversations in the initial stages of that project.
%The author would like to thank Elias Katsoulis for the conversations and remarks that contributed to the preparation of this paper.
\end{acknow}

%%%%%%%%%%%%%%%%%%%%%%%%%%%%%%%%

\end{document}